\begin{document}

\sloppy
\newtheorem{Def}{Definition}[section]
\newtheorem{Bsp}{Example}[section]
\newtheorem{Prop}[Def]{Proposition}
\newtheorem{Theo}[Def]{Theorem}
\newtheorem{Lem}[Def]{Lemma}
\newtheorem{Koro}[Def]{Corollary}
\theoremstyle{definition}
\newtheorem{Rem}[Def]{Remark}

\newcommand{\add}{{\rm add}}
\newcommand{\gd}{{\rm gl.dim }}
\newcommand{\dm}{{\rm dom.dim }}
\newcommand{\E}{{\rm E}}
\newcommand{\Mor}{{\rm Morph}}
\newcommand{\End}{{\rm End}}
\newcommand{\ind}{{\rm ind}}
\newcommand{\rsd}{{\rm res.dim}}
\newcommand{\rd} {{\rm rep.dim}}
\newcommand{\ol}{\overline}
\newcommand{\rad}{{\rm rad}}
\newcommand{\soc}{{\rm soc}}
\renewcommand{\top}{{\rm top}}
\newcommand{\pd}{{\rm proj.dim}}
\newcommand{\id}{{\rm inj.dim}}
\newcommand{\Fac}{{\rm Fac}}
\newcommand{\fd} {{\rm fin.dim }}
\newcommand{\DTr}{{\rm DTr}}
\newcommand{\cpx}[1]{#1^{\bullet}}
\newcommand{\D}[1]{{\mathscr D}(#1)}
\newcommand{\Dz}[1]{{\mathscr D}^+(#1)}
\newcommand{\Df}[1]{{\mathscr D}^-(#1)}
\newcommand{\Db}[1]{{\mathscr D}^b(#1)}
\newcommand{\C}[1]{{\mathscr C}(#1)}
\newcommand{\Cz}[1]{{\mathscr C}^+(#1)}
\newcommand{\Cf}[1]{{\mathscr C}^-(#1)}
\newcommand{\Cb}[1]{{\mathscr C}^b(#1)}
\newcommand{\K}[1]{{\mathscr K}(#1)}
\newcommand{\Kz}[1]{{\mathscr K}^+(#1)}
\newcommand{\Kf}[1]{{\mathscr  K}^-(#1)}
\newcommand{\Kb}[1]{{\mathscr K}^b(#1)}
\newcommand{\modcat}{\ensuremath{\mbox{{\rm -mod}}}}
\newcommand{\Modcat}{\ensuremath{\mbox{{\rm -Mod}}}}
\newcommand{\stmodcat}[1]{#1\mbox{{\rm -{\underline{mod}}}}}
\newcommand{\pmodcat}[1]{#1\mbox{{\rm -proj}}}
\newcommand{\imodcat}[1]{#1\mbox{{\rm -inj}}}
\newcommand{\opp}{^{\rm op}}
\newcommand{\otimesL}{\otimes^{\rm\bf L}}
\newcommand{\rHom}{{\rm\bf R}{\rm Hom}}
\newcommand{\projdim}{\pd}
\newcommand{\Hom}{{\rm Hom}}
\newcommand{\Coker}{{\rm coker}\,\,}
\newcommand{ \Ker  }{{\rm Ker}\,\,}
\newcommand{ \Img  }{{\rm Im}\,\,}
\newcommand{\Ext}{{\rm Ext}}
\newcommand{\StHom}{{\rm \underline{Hom} \, }}

\newcommand{\gm}{{\rm _{\Gamma_M}}}
\newcommand{\gmr}{{\rm _{\Gamma_M^R}}}

\def\vez{\varepsilon}\def\bz{\bigoplus}  \def\sz {\oplus}
\def\epa{\xrightarrow} \def\inja{\hookrightarrow}

\newcommand{\lra}{\longrightarrow}
\newcommand{\lraf}[1]{\stackrel{#1}{\lra}}
\newcommand{\ra}{\rightarrow}
\newcommand{\dk}{{\rm dim_{_{k}}}}

\newcommand{\rank}{{\rm rank}}

\allowdisplaybreaks[4]

{\Large \bf
\begin{center}
The Smith normal form of the $Q$-walk matrix of the Dynkin graph $A_n$
\end{center}}

\medskip

\centerline{{\bf Yaning Jia, Shengyong Pan$^*$} }
\begin{center} School of Mathematics and Statistics, Beijing Jiaotong University, \\
100044 Beijing, People's Republic of  China \\ E-mail:  22121587@bjtu.edu.cn, shypan@bjtu.edu.cn \\
\end{center}
\bigskip

\renewcommand{\thefootnote}{\alph{footnote}}
\setcounter{footnote}{-1} \footnote{ $^*$ Corresponding author.
Email: shypan@bjtu.edu.cn; Tel.: 0086 10
51685464.}
\renewcommand{\thefootnote}{\alph{footnote}}
\setcounter{footnote}{-1} \footnote{2020 Mathematics Subject
Classification: 05C50, 15A03, 15A15, 15B36.}
\renewcommand{\thefootnote}{\alph{footnote}}
\setcounter{footnote}{-1} \footnote{Keywords: Smith normal form, $Q$-walk matrix, Dynkin graph.}

\date{}

\begin{abstract}
In this paper, we give an explicit formula for the rank of the $Q$-walk matrix of the Dynkin graph $A_n$. Moreover, we prove that its Smith normal form is
$$
\mathrm{diag}\left( \underset{r=\lceil \frac{n}{2} \rceil}{\underbrace{1,2,2,...,2}},0,...,0 \right),
$$
where $r$ is the rank of the $Q$-walk matrix $W_Q\left( A_n \right) $ of the Dynkin graph $A_n$.
\end{abstract}

\tableofcontents

\section{Introduction}

Let $M$ be an $n\times n$ integral matrix of order $n$. For $1\leqslant i\leqslant n$, the $i$-th determinant divisor of $M$ , denoted by  $D(i)$, is the greatest common divisor of all minors of order $i$ of $M$. Suppose that $\rank (M)=r$, then the determinant divisors satisfy $D(i-1) |D(i)$ for $i=1,...,r$, where $D(0):=1$. The invariant factors of $M$ are defined by
$$
d_1=\frac{D\left( 1 \right)}{D\left( 0 \right)},d_2=\frac{D\left( 2 \right)}{D\left( 1 \right)},...,d_r=\frac{D\left( r \right)}{D\left( r-1 \right)}.
$$
It is well known that each $d_i$ divides $d_{i+1}$ for $i=1,...,r-1$. Then the \emph{Smith normal form} of $M$ is the diagnoal matrix
$$
\mathrm{diag}\left( d_1,...,d_r,0,...,0 \right).
$$
The \emph{walk matrix} $W(M)$ of $M$ is defined by
$$
W\left( M \right) =\left[ e_n\,,Me_n\,,\cdots ,\,M^{n-1}e_n \right],
$$
where $e_n$ is the all-ones vector of demension $n$.

Let $G$ be a simple graph with vertex set $\left\{ 1,...,n \right\}$ and let $A$ be the adjacency matrix of $G$. The \emph{adjacency walk matrix} of $G$ is
$$
W_A=W_A\left( G \right) :=\left[ e_n\,, Ae_n\,, \cdots \,, A^{n-1}e_n \right],
$$
where $e_n$ is the all-ones vector of demension $n$. It is also called the walk matrix of $G$, denoted by $W_A(G)$. Note that the $(i,j)$-entry of the walk matrix $W_A(G)$ counts the number of walks in $G$ of length $j-1$ from the vertex $i$.
In \cite{ref13}, Wang, Wang and Guo gave a formula for the rank of the walk matrix of Dynkin graph $D_n$ for $n\geq 4$. Moreover, they provided the Smith normal form of the walk matrix of the Dynkin graph $D_n$ for the case $n\not\equiv 0 (\text{mod}\;4)$. Later in \cite{ref14}, they gave the Smith normal form of the walk matrix of the Dynkin graph $D_n$ for the case $n\equiv 0 (\text{mod}\;4)$. In \cite{ref3}, S. Moon and S. Park have provided formulas for the rank and the Smith normal form of the walk matrix of the extended Dynkin graph $\widetilde{D}_n$. Recently, the Smith normal form of the walk matrix of the Dynkin graph $A_n$ was given in \cite{ref2}. Dynkin graphs and extended Dynkin graphs are widely used in the study of simple Lie algebras in Lie theory \cite{ref4,ref5}, the classifications of 
representation-finite hereditary algebras in  representation theory \cite{ref6,ref7} and spectral theory \cite{ref8,ref9}.

Let $G$ be a simple graph with vertex set $\left\{ 1,...,n \right\}$.
The \emph{signless Laplacian matrix} of $G$, denoted by $Q(G)$, is $A(G)+D(G)$, where $A(G)$ is the the adjacency matrix of $G$ and $D(G)$ is the the degree diagonal matrix of $G$ (see \cite{ref1}). The \emph{signless Laplacian matrix} (or \emph{$Q$-walk matrix}) of $G$, denoted by $W_Q\left( G \right) $, is naturally defined by
$$
W_Q(G) =\left[ e_n\,,Q(G)e_n\,,\cdots \,,Q(G)^{n-1}e_n \right].
$$
In \cite{ref1}, Yan, Yao and Wang studied the determinant of the $Q$-walk matrix of rooted product with a path. In \cite{QYJ}, Qiu, Yuan and Ji, showed that at most  $r=\lfloor \frac{n}{2} \rfloor $ invariant factors of $Q$-walk matrix of a graph with $n$ vertices are congruent to $4$ modulo $8$, moreover, 
if $2^{-\lceil \frac{5n-4r-22}{2} \rceil}\det W_Q$ is odd and square-free, then the Smith normal form of $W_Q$ can be determined uniquely by the triple $(n,r,\det W_Q)$.
Therefore, it is interesting to calculate 
the Smith normal form of the $Q$-walk matrix of Dynkin graphs.

In this paper, let $n$ be a fixed positive integer, we consider the Dynkin graph $A_n$ with vertex set $\left\{ 1,...,n \right\}$
\begin{center}
\begin{tikzpicture}
	\draw[line width=1pt](0,1)--(3,1);
	\draw[line width=1pt](3,1)--(5,1);
	\draw[dotted,line width=1pt](5,1)--(7,1);
	\draw[line width=1pt](7,1)--(9,1)--(12,1);
	\fill(0,1) circle(.05) node[below] {$1$};
	\fill(3,1) circle(.05)  node[below] {$2$};
	\filldraw(9,1) circle(.05) node[below] {$n-1$};
	\filldraw(12,1) circle(.05) node[below] {$n$};
\end{tikzpicture}
\end{center}
and give the Smith normal form of its $Q$-walk matrix.

\section{Preliminaries\label{pre}}

Unless otherwise specified, we set $r=\lceil \frac{n}{2} \rceil $. We consider the submatrix $\overline{W_Q\left( A_n \right) }$ of the $Q$-walk matrix $
W_Q\left( A_n \right) $ obtained by deleting\\
(1) the $(r+1)$-th to the $n$-th rows of $W_Q\left( A_n \right) $, and\\
(2) the $(r+1)$-th to the $n$-th columns of $W_Q\left( A_n \right) $.

In the following, we give two examples of $W_Q\left( A_n \right) $ and $\overline{W_Q\left( A_n \right) }$ for $n=3,10$.

\begin{Bsp}
The $Q$-walk matrix $W_Q(A_3) $ is
 $$
\left[ \begin{matrix}
 	1&		2&		6\\
 	1&		4&		12\\
 	1&		2&		6\\
 \end{matrix} \right]
 $$
and
 $$
 \overline{W_Q\left( A_3 \right) }=\left[ \begin{matrix}
 	1&		2\\
 	1&		4\\
 \end{matrix} \right] .
 $$
 The $Q$-walk matrix $W_Q\left( A_{10} \right) $ is
 $$
 	\left[ \begin{matrix}
 	1&		2&		6&     20&    70&     252&     924& 3432&   12870&  48620\\
 	1&		4&		14&    50&    182&    672&     2508& 9438&   35750&  136134\\
 	1&		4&		16&    62&    238&    912&     3498& 13442&  51764&  199746\\
 	1&      4&      16&    64&     254&     1002& 3938&   15442&   60468& 236568\\
 	1&     4&  16& 64& 256& 1002& 4068& 16142& 63868& 252072\\
 	1&     4& 16& 64& 256& 1002& 4068& 16142& 63868& 252072\\
 	1&     4& 16& 64& 254& 1002& 3938& 15442& 60468& 236568\\
 	1&		4&		16& 62& 238& 912& 3498& 13442& 51764& 199746\\
 	1&		4&		14& 50& 182& 672& 2508& 9438& 35750& 136134\\
 	1&		2&		6&    20&  70&  252&  924& 3432& 12870& 48620\\
 \end{matrix} \right]
 $$
 \\and
 $$
 \overline{W_Q\left( A_{10} \right) }=\left[ \begin{matrix}
 	1&		2&		6&     20&    70\\
 	1&		4&		14&    50&    182\\
 	1&		4&		16&    62&    238\\
 	1&      4&      16&    64&     254\\
 	1&      4&       16&   64&    256\\
 \end{matrix} \right] .
 $$
\end{Bsp}
We consider two partitions for the vertex set of the Dynkin graph $A_n$. The following partitions with r cells
$$
\varPi _1=\left\{ \left\{ 1,n \right\} ,\left\{ 2,n-1 \right\} ,...,\left\{ \frac{n}{2},\frac{n}{2}+1 \right\} \right\}
$$
\\and
$$
\varPi _2=\left\{ \left\{ 1,n \right\} ,\left\{ 2,n-1 \right\} ,...,\left\{ \frac{n-1}{2},\frac{n+3}{2} \right\} ,\left\{ \frac{n+1}{2} \right\} \right\}
$$
are equitable for $n$ even and odd, respectively(see \cite{ref2}). Note that for any graph $G$, the rank of the $Q$-walk matrix $
W_Q\left(G \right) $ is less than or equal to the number of cells in any equitable partitions of the vertex set of $G$(see \cite{ref10,ref11}). Thus we have rank $W_Q\left( A_n \right) \leqslant r$.

For any positive integer $m$ and any square matrix $M$, we set 
$M\oplus O_m=\left[ \begin{matrix}
	M&		\\
	&		O_m\\
\end{matrix} \right]
$, where $O_m$ is the square zero matrix of order $m$. In the following lemma, we show that $W_Q\left(A_n \right) $ and $\overline{W_Q\left( A_n \right) }\oplus O_{n-r}$ have the same Smith normal form.
\begin{Lem}\label{2.1}
The matrices $W_Q\left(A_n \right) $ and $\overline{W_Q\left( A_n \right) }\oplus O_{n-r}$ have the same Smith normal form, where $r=\lceil \frac{n}{2} \rceil
$.
\end{Lem}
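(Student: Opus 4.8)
The plan is to produce invertible integer matrices $P,R\in GL_n(\mathbb Z)$ with $P\,W_Q(A_n)\,R=\overline{W_Q(A_n)}\oplus O_{n-r}$; since the Smith normal form is a complete invariant for equivalence under $GL_n(\mathbb Z)$ acting on both sides, this gives the claim. The matrix $P$ will perform a row reduction and $R$ a column reduction, and the essential point is that both can be carried out over $\mathbb Z$ (in fact with $\{0,\pm1\}$ coefficients), not merely over $\mathbb Q$.

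\emph{Rows.} First I would exploit the automorphism $\sigma$ of $A_n$ given by $\sigma(i)=n+1-i$. Its permutation matrix commutes with $Q(A_n)=A(A_n)+D(A_n)$, since both summands are $\sigma$-invariant, and it fixes $e_n$; hence each column $Q(A_n)^{k}e_n$ of $W_Q(A_n)$ is $\sigma$-symmetric, i.e. its $i$-th and $(n+1-i)$-th entries coincide. Equivalently, row $i$ and row $n+1-i$ of $W_Q(A_n)$ are equal for every $i$ (this is the symmetry visible in Example 2.1). Thus for each $j\in\{r+1,\dots,n\}$ we have $n+1-j\in\{1,\dots,r\}$ and $n+1-j\ne j$, so subtracting row $n+1-j$ from row $j$ — a unimodular elementary operation that leaves rows $1,\dots,r$ unchanged — annihilates row $j$. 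Gathering these operations into a matrix $P\in GL_n(\mathbb Z)$ gives $P\,W_Q(A_n)=\left[\begin{smallmatrix}B\\ 0\end{smallmatrix}\right]$, where $B\in\mathbb Z^{r\times n}$ consists of rows $1,\dots,r$ of $W_Q(A_n)$, whose first $r$ columns are by definition $\overline{W_Q(A_n)}$.

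\emph{Columns.} Next I would identify $B$ as a truncated walk matrix of a small integer matrix via the equitable partition. Let $\Pi$ be $\Pi_1$ or $\Pi_2$ according to the parity of $n$ (equitable by \cite{ref2}), let $S\in\{0,1\}^{n\times r}$ be its characteristic matrix and $\widetilde Q\in\mathbb Z^{r\times r}$ the associated quotient matrix, so $Q(A_n)S=S\widetilde Q$ and therefore $Q(A_n)^{k}e_n=S\widetilde Q^{\,k}e_r$, using $Se_r=e_n$. Hence $W_Q(A_n)=S\,[\,e_r\ \widetilde Q e_r\ \cdots\ \widetilde Q^{\,n-1}e_r\,]$. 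From the explicit shapes of $\Pi_1,\Pi_2$, the vertices $1,\dots,r$ lie in pairwise distinct cells, with vertex $i$ in the $i$-th cell, so the top $r\times r$ block of $S$ is $I_r$; consequently $B=[\,e_r\ \widetilde Q e_r\ \cdots\ \widetilde Q^{\,n-1}e_r\,]$ and in particular $\overline{W_Q(A_n)}=[\,e_r\ \widetilde Q e_r\ \cdots\ \widetilde Q^{\,r-1}e_r\,]$. By the Cayley--Hamilton theorem $\widetilde Q$ satisfies its monic integer characteristic polynomial, so $\widetilde Q^{\,k}$ for every $k\ge r$ is an integer linear combination of $I,\widetilde Q,\dots,\widetilde Q^{\,r-1}$; thus the last $n-r$ columns of $B$ equal $\overline{W_Q(A_n)}\,T$ for some $T\in\mathbb Z^{r\times(n-r)}$. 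Right-multiplying by the unimodular matrix $R=\left[\begin{smallmatrix}I_r & -T\\ 0 & I_{n-r}\end{smallmatrix}\right]$ clears these columns, yielding $P\,W_Q(A_n)\,R=\overline{W_Q(A_n)}\oplus O_{n-r}$.

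\emph{Main obstacle.} The only genuine subtlety is integrality: the Smith normal form is preserved solely under $GL_n(\mathbb Z)$-equivalence, so one must make sure no rational division enters. In the row step this is automatic, because the mirrored rows are literally equal (coefficients $\pm1$); in the column step it is guaranteed by Cayley--Hamilton over $\mathbb Z$ (a monic integer characteristic polynomial). The secondary point requiring care is the verification that the top $r\times r$ block of the characteristic matrix $S$ is the identity — this is exactly where the concrete form of the equitable partitions $\Pi_1,\Pi_2$ is used (a less convenient equitable partition could introduce denominators here), and it also yields, as a byproduct, the description $\overline{W_Q(A_n)}=[\,e_r\ \widetilde Q e_r\ \cdots\ \widetilde Q^{\,r-1}e_r\,]$ of the matrix whose Smith normal form one ultimately wants to compute.
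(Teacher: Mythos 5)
Your proposal is correct, and its skeleton (kill rows $r+1,\dots,n$ using the mirror symmetry, then clear columns $r+1,\dots,n$ as combinations of the first $r$) is the same as the paper's. But your justification of the column step is genuinely different and, in fact, more careful than the paper's. The paper argues that the first $t=\operatorname{rank}W_Q(A_n)$ columns are linearly independent (citing \cite{ref11,ref12}) and that the later columns are therefore ``linear combinations'' of them; as written this is only a statement over $\mathbb{Q}$, and rational column operations do not preserve the Smith normal form, so the integrality of the reduction is left implicit. You instead factor $W_Q(A_n)=S\,[\,e_r\ \widetilde Q e_r\ \cdots\ \widetilde Q^{\,n-1}e_r\,]$ through the quotient matrix $\widetilde Q=B_i+\overline{D(A_n)}$ of the equitable partition, observe that the top $r\times r$ block of the characteristic matrix is $I_r$, and invoke Cayley--Hamilton for the integer matrix $\widetilde Q$ to get \emph{integer} coefficients for columns $r+1,\dots,n$ in terms of the first $r$; this makes the second factor $R$ honestly unimodular and closes the gap. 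A pleasant byproduct of your route is that the identity $\overline{W_Q(A_n)}=W\bigl(B_i+\overline{D(A_n)}\bigr)$ (the paper's Lemma \ref{2.2}) falls out for free, whereas the paper proves it separately. The only caveat is that your argument silently uses that all vertices in a cell of $\Pi_1$ or $\Pi_2$ have equal degree (so that the partition is equitable for $Q(A_n)=A(A_n)+D(A_n)$ and not just for $A(A_n)$); this is true here and is exactly the content of the paper's identity $D(A_n)C_i=C_i\overline{D(A_n)}$, but it deserves a sentence.
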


\begin{proof}
    
Let rank $W_Q\left( A_n \right) =t$. Then  $t\leqslant r$. Since the first $t$ columns of $W_Q\left( A_n \right)$ are linearly independent (see \cite{ref11,ref12}), they form a basis for the column space of $W_Q\left( A_n \right)$. Thus the $i$-th column can be written as a linear combination of
$$
e_n,Q(G)e_n,...,Q(G)^{t-1}e_n
$$
for $r+1\leqslant i\leqslant n$. Note that the $i$-th row and the $(n+1-i)$-th row are identical for $i=1,...,r$. Hence we obtain $\overline{W_Q\left( A_n \right) }\oplus O_{n-r}$ by using the elementary row and column operations on $W_Q\left(A_n \right) $. Therefore $W_Q\left(A_n \right) $ and $\overline{W_Q\left( A_n \right) }\oplus O_{n-r}$ have the same Smith normal form.
\end{proof} 

Let
\begin{center}
$C_1=\left[ \begin{matrix}
		1 &		\\
		&\ddots	\\
		& & 1\\
		& & & 1\\
		& & & 1\\
		& & 1\\
		& \iddots & & \\
		1 &  \\
\end{matrix} \right]_{n\times r}
$ and   
$C_2=\left[ \begin{matrix}
		1 &		\\
		&\ddots	\\
		& & 1\\
		& & & 1\\
		& & 1\\
		& \iddots & & \\
		1 &  \\
	\end{matrix} \right]_{n\times r}
	$
\end{center}
be the characteristic matrices of the partitions $
\varPi _1$ and $\varPi _2$, respectively. The divisor matrices of $\varPi _1$ and $\varPi _2$ are
\begin{center}
$B_1=\left[ \begin{matrix}
	0& 1&		\\
	1& 0& 1 \\
	&\ddots& \ddots& \ddots&	\\
	& & 1& 0& 1\\
	& &  & 1& 1\\
\end{matrix} \right]_{r\times r}
$ and $B_2=\left[ \begin{matrix}
	0& 1&		\\
	1& 0& 1&  & \\
	&\ddots& \ddots& \ddots&  &	\\
	& & 1& 0& 1\\
	& & & 2& 0\\
\end{matrix} \right]_{r\times r}
$,
\end{center}
respectively(see \cite{ref2}).

By calculation and observation, it is easy to see that the degree matrix  $D\left( A_n \right) $ of the Dynkin graph $A_n$ has the following form
$$D\left( A_n \right)=\left[ \begin{matrix}
	1 &		\\
	& 2&  &	\\
	& & 2& & \\
	& & & \ddots& & \\
	& & & & 2& &\\
    &  & & & & 1\\
\end{matrix} \right]_{n\times n}.
$$
Let $\overline{D\left( A_n \right) }$ be the $r\times r$ submatrix obtained from the $D\left( A_n \right) $ by deleting\\
$(1)$ the $(r+1)$-th to the $n$-th rows of $D\left( A_n \right) $, and
\\
$(2)$ the $(r+1)$-th to the $n$-th columns of $D\left( A_n \right) $.

In the following lemma, we give the relation between $\overline{W_Q(A_n)}$ and $W(B_i+\overline{D(A_n)})$ for $i=1,2$.

\begin{Lem}\label{2.2}
$(a)$ If $n$ is even, then $
	\overline{W_Q\left( A_n \right) }=W\left( B_1+\overline{D\left( A_n \right) } \right)
	$.
    
$(b)$ If $n$ is odd, then $
	\overline{W_Q\left( A_n \right) }=W\left( B_2+\overline{D\left( A_n \right) } \right)
	$.
 \end{Lem}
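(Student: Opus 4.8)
The plan is to reduce everything to the equitable‑partition identity $A(A_n)\,C_i = C_i\,B_i$ for $i=1,2$, which is exactly the assertion (recalled above from \cite{ref2}) that $B_i$ is the divisor matrix of the equitable partition $\varPi_i$, and which can in any case be checked directly by counting, for each vertex of a cell, its neighbours in every cell (the only nonobvious entries being the ``$1$'' in the bottom–right corner of $B_1$ and the ``$2$'' in the bottom–left corner of $B_2$, which arise from the folding of the path at its middle). I treat $n$ even first. The extra ingredient needed to pass from the adjacency matrix to $Q(A_n)=A(A_n)+D(A_n)$ is that the degree function of $A_n$ is constant on each cell of $\varPi_1$: the endpoints $1$ and $n$ both have degree $1$ and together form the cell $\{1,n\}$, while every other vertex has degree $2$. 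Hence $D(A_n)\,C_1 = C_1\,\overline{D(A_n)}$, where the $j$-th diagonal entry of $\overline{D(A_n)}$ is the common degree of the $j$-th cell; adding this to $A(A_n)\,C_1 = C_1\,B_1$ gives $Q(A_n)\,C_1 = C_1\,\widehat B_1$ with $\widehat B_1 := B_1+\overline{D(A_n)}$.

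Next I iterate. By induction on $k$, $Q(A_n)^k\,C_1 = C_1\,\widehat B_1^{\,k}$ for every $k\ge 0$, and since each vertex lies in exactly one cell, $e_n = C_1\,e_r$, so $Q(A_n)^k\,e_n = C_1\,\widehat B_1^{\,k}\,e_r$. Writing this out for $k=0,1,\dots,r-1$ shows that the matrix formed by the first $r$ columns of $W_Q(A_n)$ equals $C_1\,[\,e_r\ \widehat B_1 e_r\ \cdots\ \widehat B_1^{\,r-1}e_r\,] = C_1\,W(\widehat B_1)$; this carries out the column deletion in step $(2)$ of the definition of $\overline{W_Q(A_n)}$. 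For the row deletion in step $(1)$, observe that the cells of $\varPi_1$ are listed so that vertex $i$ belongs to the $i$-th cell for each $i\le r$; hence row $i$ of $C_1$ is the $i$-th standard basis vector for $i\le r$, i.e. the first $r$ rows of $C_1$ form $I_r$. Deleting rows $r+1,\dots,n$ of $C_1\,W(\widehat B_1)$ is therefore left multiplication by $[\,I_r\mid 0\,]$, which produces $I_r\,W(\widehat B_1) = W(B_1+\overline{D(A_n)})$. This is exactly $\overline{W_Q(A_n)}$, proving $(a)$. Part $(b)$ follows by the same argument with $\varPi_2,C_2,B_2$ in place of $\varPi_1,C_1,B_1$; the only additional point is the singleton cell $\{\tfrac{n+1}{2}\}$, whose unique vertex has degree $2$ (for $n\ge 3$), so that $D(A_n)\,C_2 = C_2\,\overline{D(A_n)}$ still holds, and the first $r$ rows of $C_2$ are again $I_r$.

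I expect no serious difficulty here; the argument is purely structural once the quotient relation is in hand. The only place demanding care is the bookkeeping: one must confirm that the \emph{specific} matrices $B_1,B_2$ and $\overline{D(A_n)}$ displayed above are the correct divisor and degree data (in particular the middle/bottom rows, where the path folds), that the degree really is constant on every cell, and that the prescribed deletion of the last $n-r$ rows and columns corresponds precisely to left multiplication by $[\,I_r\mid 0\,]$ and to keeping the first $r$ columns — all of which rest on the vertices of $A_n$ being labelled $1,\dots,n$ along the path and the cells of $\varPi_i$ being listed in the order $\{1,n\},\{2,n-1\},\dots$.
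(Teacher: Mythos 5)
Your proposal is correct and follows essentially the same route as the paper: establish $Q(A_n)C_1=C_1\bigl(B_1+\overline{D(A_n)}\bigr)$ from the adjacency and degree intertwining relations, iterate to powers, use $C_1e_r=e_n$, and then observe that deleting the last $n-r$ rows turns $C_1$ into the identity. The extra bookkeeping you supply (degree constancy on cells, the identity block in $C_1$, $C_2$) is exactly what the paper's proof implicitly relies on.
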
	
 \begin{proof}
  We only prove $(a)$ and the proof of $(b)$
is similar. Since $D\left( A_n \right) C_1=C_1\overline{D\left( A_n \right) }$ and $AC_1=C_1B_1$, we have $(A+D(A_n)) C_1=C_1( B_1+\overline{D( A_n)}) $. Then we obtain $\left( A+D\left( A_n \right) \right) ^kC_1=C_1\left( B_1+\overline{D\left( A_n \right) } \right) ^k$ for all $k\geqslant 0$. Since $C_1e_r=e_n$, we have $
( A+D(A_n))^ke_n=\left( A+D\left( A_n \right) \right) ^kC_1e_r=C_1\left( B_1+\overline{D\left( A_n \right) } \right) ^ke_r$ for all $k\geqslant 0$. It follows that
$$
\left[ e_n,( A+D( A_n)e_n, 	\cdots,( A+D(A_n))^{r-1}e_n\right] =
C_1\left[e_r,( B_1+\overline{D( A_n)})e_r,\cdots,( B_1+\overline{D( A_n)}) ^{r-1}e_r\right].
$$
If we delete the $(r+1)$-th to the $n$-th rows of $
\left[ e_n, ( A+D( A_n))e_n,\cdots,	(A+D( A_n))^{r-1}e_n \right] $ and $C_1$, we get $
\overline{W( A+D( A_n))}$ and the identity matrix, respectively. So we have $
\overline{W(A+D(A_n)) }=W\left( B_1+\overline{D\left( A_n \right) } \right)
$. Hence, $\overline{W_Q\left( A_n \right) }=W\left( B_1+\overline{D\left( A_n \right) } \right) $.
 \end{proof}

The following lemma provides a method for computing the determinant and the rank of a walk matrix.

\begin{Lem}\cite{ref13}\label{2.3}
Let M be a real $n\times n$ matrix which is diagonalizable over the real field $\mathbb{R}$. Let $\xi _1,\xi _2,...,\xi _n$ be $n$ linearly independent eigenvectors of $
M^T$ corresponding to eigenvalues $\lambda _1,\lambda _2,...,\lambda _n$, respectively. Then we have
$$
\det W(M) =\frac{\prod_{1\leqslant k<j\leqslant n}\left( \lambda _j-\lambda _k \right) \prod_{j=1}^n{e_{n}^{T}\xi _j}}{\det \left[ \begin{matrix}
		\xi _1&		\xi _2&		\cdots&		\xi _n\\
	\end{matrix} \right]}.
$$
\\Moreover, if $\lambda _1,\lambda _2,...,\lambda _n$ are pairwise different, then
$$
\rank \;W(M) =\left| \left\{ j|e_{n}^{T}\xi _j\ne 0 \,\text{and}\,\,j=1,...,n \right\} \right|.
$$
\end{Lem}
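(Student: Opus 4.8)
The plan is to left-multiply the walk matrix $W(M)$ by the eigenvectors of $M^{T}$, which diagonalizes the problem and reduces both assertions to the classical Vandermonde determinant. First I would observe that transposing $M^{T}\xi_{j}=\lambda_{j}\xi_{j}$ gives $\xi_{j}^{T}M=\lambda_{j}\xi_{j}^{T}$, and iterating yields $\xi_{j}^{T}M^{k}=\lambda_{j}^{k}\xi_{j}^{T}$ for all $k\ge 0$. Hence, applying $\xi_{j}^{T}$ on the left of $W(M)=[\,e_{n}\ \ Me_{n}\ \ \cdots\ \ M^{n-1}e_{n}\,]$ and using that $\xi_{j}^{T}e_{n}=e_{n}^{T}\xi_{j}$ is a scalar, one gets the single row
$$\xi_{j}^{T}W(M)=(e_{n}^{T}\xi_{j})\,[\,1\ \ \lambda_{j}\ \ \lambda_{j}^{2}\ \ \cdots\ \ \lambda_{j}^{\,n-1}\,].$$

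Next I would stack these $n$ identities. Let $P$ be the matrix whose $j$-th row is $\xi_{j}^{T}$, so that $P=[\,\xi_{1}\ \cdots\ \xi_{n}\,]^{T}$ and $\det P=\det[\,\xi_{1}\ \cdots\ \xi_{n}\,]$; let $\Lambda=\mathrm{diag}(e_{n}^{T}\xi_{1},\dots,e_{n}^{T}\xi_{n})$; and let $V$ be the Vandermonde matrix with $(j,k)$-entry $\lambda_{j}^{\,k-1}$. Then the previous step is precisely the factorization $P\,W(M)=\Lambda\,V$. Since $\xi_{1},\dots,\xi_{n}$ are linearly independent, $P$ is invertible, so taking determinants and substituting $\det V=\prod_{1\le k<j\le n}(\lambda_{j}-\lambda_{k})$ gives the stated formula for $\det W(M)$ (the quantity written $\det(M)$ in the statement must be read as $\det W(M)$). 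Note that this part uses only linear independence of the $\xi_{j}$, not distinctness of the $\lambda_{j}$.

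For the rank statement I would add the hypothesis that $\lambda_{1},\dots,\lambda_{n}$ are pairwise distinct, so that $V$ is invertible as well. Then in $P\,W(M)=\Lambda\,V$ both $P$ and $V$ are invertible, hence $\mathrm{rank}\,W(M)=\mathrm{rank}\,\Lambda$, which equals the number of indices $j$ with $e_{n}^{T}\xi_{j}\ne 0$. The argument is routine linear algebra; the only point demanding care is the bookkeeping between $M$ and $M^{T}$ and between row and column vectors when setting up the left action, and there is no genuine obstacle to overcome.
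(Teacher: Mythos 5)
Your argument is correct: the factorization $PW(M)=\Lambda V$ with $P$ the matrix of stacked row eigenvectors, $\Lambda=\mathrm{diag}(e_n^T\xi_1,\dots,e_n^T\xi_n)$ and $V$ the Vandermonde matrix immediately yields both the determinant formula (reading the statement's $\det(M)$ as $\det W(M)$, which is indeed a typo) and the rank count. The paper itself gives no proof of this lemma, citing it from the reference on the walk matrix of $D_n$, and the argument there is essentially this same one, so your proposal matches the standard approach.
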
	

\section{Main results\label{main}}
We will give the main results of this paper in this section.

\subsection{Smith normal form of $W_Q\left( A_n \right) $ for $n$ is even}

In the following proposition, we find the eigenvalues and corresponding eigenvectors of $
\left( B_1+\overline{D\left( A_n \right) } \right) ^T
$.

\begin{Prop}
	Let $\lambda _k=2-2\cos \alpha _k$ and let
	$$
	v_k=\left[ \begin{array}{c}
		\left( -1 \right) ^{r-1}\left( 1+\sum_{i=1}^{r-1}{2\cos i\alpha _k} \right)\\
		\left( -1 \right) ^{r-2}\left( 1+\sum_{i=1}^{r-2}{2\cos i\alpha _k} \right)\\
		\vdots\\
		-1-2\cos \alpha _k\\
		1\\
	\end{array} \right] ,
	$$
where $\alpha _k=\frac{2k-1}{2r}\pi $  for $k=1,...,r$. Then $v_k$ is an eigenvector of $
\left( B_1+\overline{D\left( A_n \right) } \right) ^T
$ corresponding to the eigenvalue $\lambda _k$ for $k=1,...,r$.
\end{Prop}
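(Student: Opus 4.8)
The plan is to make the matrix $M:=B_1+\overline{D(A_n)}$ explicit and verify the eigenvalue equation entrywise. Since $n$ is even and $r=n/2$, deleting the last $n-r$ rows and columns of $D(A_n)=\mathrm{diag}(1,2,\dots,2,1)$ removes only the trailing $1$, so $\overline{D(A_n)}=\mathrm{diag}(1,2,\dots,2)$ and
$$
M=\left[\begin{matrix}
1& 1& & & \\
1& 2& 1& & \\
& \ddots& \ddots& \ddots& \\
& & 1& 2& 1\\
& & & 1& 3\\
\end{matrix}\right]_{r\times r}.
$$
In particular $M$ is symmetric, so $M^T=M$; thus it suffices to prove $Mv_k=\lambda_kv_k$, which is the statement of the proposition.

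Next I would rewrite the entries of $v_k$ in a usable form. Writing $v_k=(x_1,\dots,x_r)^T$ from top to bottom, the definition says $x_{r-j}=(-1)^jc_j$ for $0\le j\le r-1$, where $c_j:=1+\sum_{i=1}^{j}2\cos(i\alpha_k)$ and $c_0=1$. The Dirichlet-kernel identity gives the closed form $c_j=\sin((2j+1)\alpha_k/2)/\sin(\alpha_k/2)$, which is legitimate since $\alpha_k/2\in(0,\pi/2)$ forces $\sin(\alpha_k/2)\ne0$. From it I extract the two facts I will use: $c_j-c_{j-1}=2\cos(j\alpha_k)$, and, by a single sum-to-product step, $c_{j+1}+c_{j-1}=2\cos(\alpha_k)\,c_j$.

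Then I would check $Mv_k=\lambda_kv_k$ one row-type at a time, using $\lambda_k-1=1-2\cos\alpha_k$, $\lambda_k-2=-2\cos\alpha_k$, and $\lambda_k-3=-1-2\cos\alpha_k$. The bottom row reads $x_{r-1}+3x_r=\lambda_kx_r$, immediate from $x_r=1$ and $x_{r-1}=-1-2\cos\alpha_k=\lambda_k-3$. An interior row $2\le i\le r-1$, written as $i=r-j$ with $1\le j\le r-2$, reads $x_{i-1}+2x_i+x_{i+1}=\lambda_kx_i$; dividing by $(-1)^j$ it becomes exactly the three-term relation $c_{j+1}+c_{j-1}=2\cos(\alpha_k)c_j$ above, valid for every $\alpha_k$. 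The top row reads $x_1+x_2=\lambda_kx_1$; dividing by $(-1)^{r-1}$ it becomes the single identity $c_{r-2}=(2\cos\alpha_k-1)c_{r-1}$. Substituting the closed forms and clearing $\sin(\alpha_k/2)$ turns this into $\sin((2r-1)\alpha_k/2)-\sin((2r+1)\alpha_k/2)=-2\cos(r\alpha_k)\sin(\alpha_k/2)$, whose right-hand side vanishes precisely because $r\alpha_k=\frac{(2k-1)\pi}{2}$, so $\cos(r\alpha_k)=0$.

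The trigonometric manipulations (the Dirichlet-kernel closed form and the two sum-to-product reductions) are mechanical and uniform in $\alpha_k$; the one substantive point is the top-row equation, which is the ``boundary condition'' at vertex $1$ and is exactly what forces the spectral values $\alpha_k=\frac{2k-1}{2r}\pi$. So the main things to keep straight are the sign bookkeeping produced by the factors $(-1)^j$ and the clean reduction of the last row to $\cos(r\alpha_k)=0$.
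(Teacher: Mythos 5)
Your proposal is correct and follows essentially the same route as the paper: an entrywise verification of $\left( B_1+\overline{D\left( A_n \right)} \right)v_k=\lambda _kv_k$, with the interior rows handled by the product-to-sum identity (your three-term recurrence $c_{j+1}+c_{j-1}=2\cos \alpha _k\,c_j$ is exactly the paper's telescoping computation in disguise) and the boundary row reduced to $\cos r\alpha _k=0$. Your packaging via the Dirichlet-kernel closed form for $c_j$ is a tidier bookkeeping device but not a different argument.
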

\begin{proof}
It is enough to show that $
\left( B_1+\overline{D\left( A_n \right) } \right) ^Tv_k=\lambda _kv_k
$ for $k=1,...,r$. First, we show that the first entries of $\left( B_1+\overline{D\left( A_n \right) } \right) ^Tv_k$ and $\lambda _kv_k$ are the same, that is,
$$
\left( -1 \right) ^{r-1}\left( 1+\sum_{i=1}^{r-1}{2\cos i\alpha _k} \right) +\left( -1 \right) ^{r-2}\left( 1+\sum_{i=1}^{r-2}{2\cos i\alpha _k} \right) =\left( -1 \right) ^{r-1}\left( 2-2\cos \alpha _k \right) \left( 1+\sum_{i=1}^{r-1}{2\cos i\alpha _k} \right).
$$
The first entry of $\left( B_1+\overline{D\left( A_n \right) } \right) ^Tv_k$ is
$$\left( -1 \right) ^{r-1}\left( 1+\sum_{i=1}^{r-1}{2\cos i\alpha _k} \right) +\left( -1 \right) ^{r-2}\left( 1+\sum_{i=1}^{r-2}{2\cos i\alpha _k} \right)$$
\begin{equation*}
	\begin{split}
		&=\left( -1 \right) ^{r-1}\left( 1+\sum_{i=1}^{r-1}{2\cos i\alpha _k} \right) +\left( -1 \right) ^{r-1}\left( -1-\sum_{i=1}^{r-2}{2\cos i\alpha _k} \right)\\
		&=\left( -1 \right) ^{r-1}2\cos \left( r-1 \right) \alpha _k .
	\end{split}
\end{equation*}
The first entry of $\lambda _kv_k$ is
\begin{equation*}
	\begin{split}
		&\left( -1 \right) ^{r-1}\left( 2-2\cos \alpha _k \right) \left( 1+\sum_{i=1}^{r-1}{2\cos i\alpha _k} \right)\\
		&=-2\left( -1 \right) ^{r-2}\left( 1-\cos \alpha _k \right) \left( 1+\sum_{i=1}^{r-1}{2\cos i\alpha _k} \right) \\
		&=-2\left( -1 \right) ^{r-2}\left[ \left( 1+\sum_{i=1}^{r-1}{2\cos i\alpha _k} \right) -\left( \cos \alpha _k+\sum_{i=1}^{r-1}{2\cos \alpha _k\cos i\alpha _k} \right) \right] .
	\end{split}
\end{equation*}
\\By applying the formula $
2\cos \alpha _k\cos i\alpha _k=\cos \left( i+1 \right) \alpha _k+\cos \left( i-1 \right) \alpha _k
$ for $i=1,...,r-1$, we obtain
\begin{equation*}
	\begin{split}
		& \cos \alpha _k+\sum_{i=1}^{r-1}{2\cos \alpha _k\cos i\alpha _k}\\
		&=\cos \alpha _k+\sum_{i=1}^{r-1}{\left( \cos \left( i+1 \right) \alpha _k+\cos \left( i-1 \right) \alpha _k \right)}\\
		&=1+\sum_{i=1}^r{\cos i\alpha _k}+\sum_{i=1}^{r-2}{\cos i\alpha _k} .
	\end{split}
\end{equation*}
It follows that 
\begin{equation*}
	\begin{split}
		&\left( -1 \right) ^{r-1}\left( 2-2\cos \alpha _k \right) \left( 1+\sum_{i=1}^{r-1}{2\cos i\alpha _k} \right)\\
		&=-2\left( -1 \right) ^{r-2}\left[ \left( 1+\sum_{i=1}^{r-1}{2\cos i\alpha _k} \right) -\left( 1+\sum_{i=1}^r{\cos i\alpha _k}+\sum_{i=1}^{r-2}{\cos i\alpha _k} \right) \right]\\
		&=-2\left( -1 \right) ^{r-2}\left( \cos \left( r-1 \right) \alpha _k-\cos r\alpha _k \right) .
\end{split}
\end{equation*}
\\Since $\cos r\alpha _k=\cos \frac{2k-1}{2}\pi =0$, it follows that
$$-2\left( -1 \right) ^{r-2}\cos \left( r-1 \right) \alpha _k=\left( -1 \right) ^{r-1}2\cos \left( r-1 \right) \alpha _k .$$
\\Hence the first entries of $\left( B_1+\overline{D\left( A_n \right) } \right) ^Tv_k$ and $\lambda _kv_k$ are the same.

Next, we prove that the $j$-th entries of $\left( B_1+\overline{D\left( A_n \right) } \right) ^Tv_k$ and $\lambda _kv_k$ are equal for $j=2,...,r-1$.
\\The $j$-th entry of $\left( B_1+\overline{D\left( A_n \right) } \right) ^Tv_k$  is
\begin{equation*}
	\begin{split}
	&\left( -1 \right) ^{r+1-j}\left( 1+\sum_{i=1}^{r+1-j}{2\cos i\alpha _k} \right) +2\left( -1 \right) ^{r-j}\left( 1+\sum_{i=1}^{r-j}{2\cos i\alpha _k} \right) +\left( -1 \right) ^{r-1-j}\left( 1+\sum_{i=1}^{r-1-j}{2\cos i\alpha _k} \right)\\
	&=\left( -1 \right) ^{r+1-j}\left( 1+\sum_{i=1}^{r+1-j}{2\cos i\alpha _k} \right) -2\left( -1 \right) ^{r+1-j}\left( 1+\sum_{i=1}^{r-j}{2\cos i\alpha _k} \right) +\left( -1 \right) ^{r+1-j}\left( 1+\sum_{i=1}^{r-1-j}{2\cos i\alpha _k} \right) \\
	&=\left( -1 \right) ^{r+1-j}\left( \sum_{i=1}^{r+1-j}{2\cos i\alpha _k}+\sum_{i=1}^{r-1-j}{2\cos i\alpha _k}-2\sum_{i=1}^{r-j}{2\cos i\alpha _k} \right) \\
	&=\left( -1 \right) ^{r+1-j}\left( 2\cos \left( r+1-j \right) \alpha _k-2\cos \left( r-j \right) \alpha _k \right) .	
	\end{split}
\end{equation*}
\\The $j$-th entry of $\lambda _kv_k$ is
\begin{equation*}
	\begin{split}
		&\left( -1 \right) ^{r-j}\left( 2-2\cos \alpha _k \right) \left( 1+\sum_{i=1}^{r-j}{2\cos i\alpha _k} \right) \\
		&=-2\left( -1 \right) ^{r+1-j}\left( 1-\cos \alpha _k \right) \left( 1+\sum_{i=1}^{r-j}{2\cos i\alpha _k} \right) \\
		&=-2\left( -1 \right) ^{r+1-j}\left[ \left( 1+\sum_{i=1}^{r-j}{2\cos i\alpha _k} \right) -\left( \cos \alpha _k+\sum_{i=1}^{r-j}{2\cos \alpha _k\cos i\alpha _k} \right) \right].
	\end{split}
\end{equation*}
\\Similarly, by applying the formula $
2\cos \alpha _k\cos i\alpha _k=\cos \left( i+1 \right) \alpha _k+\cos \left( i-1 \right) \alpha _k
$ for $i=1,...,r-j$, we obtain
\begin{equation*}
	\begin{split}
		&\left( -1 \right) ^{r-j}\left( 2-2\cos \alpha _k \right) \left( 1+\sum_{i=1}^{r-j}{2\cos i\alpha _k} \right) \\
		&=-2\left( -1 \right) ^{r+1-j}\left[ \left( 1+\sum_{i=1}^{r-j}{2\cos i\alpha _k} \right) -\left( 1+\sum_{i=1}^{r-1-j}{\cos i\alpha _k}+\sum_{i=1}^{r+1-j}{\cos i\alpha _k} \right) \right] \\
		&=-2\left( -1 \right) ^{r+1-j}\left( \cos \left( r-j \right) \alpha _k-\cos \left( r+1-j \right) \alpha _k \right) \\
		&=\left( -1 \right) ^{r+1-j}\left( 2\cos \left( r+1-j \right) \alpha _k-2\cos \left( r-j \right) \alpha _k \right) .
	\end{split}
\end{equation*}
\\Thus the $j$-th entries of $\left( B_1+\overline{D\left( A_n \right) } \right) ^Tv_k$ and $\lambda _kv_k$ are the same for $j=2,...,r-1$.
\\Since
$$
1\times \left( -1-2\cos \alpha _k \right) +3\times 1=2-2\cos \alpha _k=\left( 2-2\cos \alpha _k \right) \times 1,
$$
the last entries of $\left( B_1+\overline{D\left( A_n \right) } \right) ^Tv_k$ and $\lambda _kv_k$ are also equal. Hence, $
\left( B_1+\overline{D\left( A_n \right) } \right) ^Tv_k=\lambda _kv_k
$ for $k=1,...,r$.
\end{proof}

\par In order to find the determinant of $W\left( B_1+\overline{D\left( A_n \right) } \right) $, we first compute the product $\prod_{k=1}^r{e_{r}^{T}v_k}$.

\begin{Lem}\label{3.2}
Let $e_r$ be the all-ones vector of dimension $r$, and let
	$$
	v_k=\left[ \begin{array}{c}
		\left( -1 \right) ^{r-1}\left( 1+\sum_{i=1}^{r-1}{2\cos i\alpha _k} \right)\\
		\left( -1 \right) ^{r-2}\left( 1+\sum_{i=1}^{r-2}{2\cos i\alpha _k} \right)\\
		\vdots\\
		-1-2\cos \alpha _k\\
		1\\
	\end{array} \right] ,
	$$
where $\alpha _k=\frac{2k-1}{2r}\pi $  for $k=1,...,r$. Then
	$$
	\prod_{k=1}^r{e_{r}^{T}v_k}=\left( -1 \right) ^{\lfloor \scriptstyle{\frac{r}{2}} \rfloor}2^{r-1}.
	$$
Furthermore, $\rank \;W\left( B_1+\overline{D\left( A_n \right) } \right) =r$.
\end{Lem}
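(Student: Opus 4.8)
The plan is to derive a closed formula for each scalar $e_r^{T}v_k$, then multiply those formulas together for the first assertion and feed the nonvanishing of these scalars into Lemma~\ref{2.3} for the rank. Write the $j$-th coordinate of $v_k$ as $(v_k)_j=(-1)^{r-j}\bigl(1+\sum_{i=1}^{r-j}2\cos i\alpha_k\bigr)$ for $j=1,\dots,r$, and apply the Dirichlet-kernel identity $1+\sum_{i=1}^{m}2\cos i\theta=\sin\!\bigl((m+\tfrac12)\theta\bigr)/\sin(\theta/2)$ with $\theta=\alpha_k$ to get $(v_k)_j=(-1)^{r-j}\sin\!\bigl((r-j+\tfrac12)\alpha_k\bigr)/\sin(\alpha_k/2)$. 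Summing over $j$ and reindexing by $l=r-j$,
\[
e_r^{T}v_k=\frac{1}{\sin(\alpha_k/2)}\sum_{l=0}^{r-1}(-1)^{l}\sin\!\bigl((l+\tfrac12)\alpha_k\bigr).
\]

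Next I would evaluate the alternating sine sum. The quickest route is to sum the geometric series $\sum_{l=0}^{r-1}(-e^{i\alpha_k})^{l}$ and take imaginary parts: here $e^{i\alpha_k/2}/(1+e^{i\alpha_k})=1/(2\cos(\alpha_k/2))$ is real, and the only fact needed from the preceding Proposition is that $r\alpha_k=\frac{(2k-1)\pi}{2}$, so $e^{ir\alpha_k}=(-1)^{k-1}i$ (equivalently $\cos r\alpha_k=0$). One obtains $\sum_{l=0}^{r-1}(-1)^{l}\sin\!\bigl((l+\tfrac12)\alpha_k\bigr)=(-1)^{r+k}/\bigl(2\cos(\alpha_k/2)\bigr)$, and therefore
\[
e_r^{T}v_k=\frac{(-1)^{r+k}}{2\sin(\alpha_k/2)\cos(\alpha_k/2)}=\frac{(-1)^{r+k}}{\sin\alpha_k},\qquad k=1,\dots,r .
\]
Since $\alpha_k=\frac{(2k-1)\pi}{2r}\in(0,\pi)$ we have $\sin\alpha_k\neq0$, so $e_r^{T}v_k\neq0$ for every $k$; one could also reach this via the telescoping identity $2\sin(\alpha_k/2)(-1)^{l}\sin((l+\tfrac12)\alpha_k)=(-1)^{l}\bigl(\cos l\alpha_k-\cos(l+1)\alpha_k\bigr)$ followed by a second application of the Dirichlet kernel.

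For the explicit value I would take the product. The sign is $(-1)^{\sum_{k=1}^{r}(r+k)}=(-1)^{r^{2}+r(r+1)/2}$; since $(-1)^{r^{2}}=(-1)^{r}$ and $(-1)^{r(r+1)/2}=(-1)^{\lceil r/2\rceil}=(-1)^{r}(-1)^{\lfloor r/2\rfloor}$, the product of the two is $(-1)^{\lfloor r/2\rfloor}$. For the magnitude I would use the classical evaluation $\prod_{k=1}^{r}\sin\frac{(2k-1)\pi}{2r}=2^{-(r-1)}$, which is the special case $x=\pi/(2r)$ of $\prod_{j=0}^{r-1}\sin\!\bigl(x+\tfrac{j\pi}{r}\bigr)=\sin(rx)/2^{r-1}$. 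Combining,
\[
\prod_{k=1}^{r}e_r^{T}v_k=\frac{(-1)^{\lfloor r/2\rfloor}}{\prod_{k=1}^{r}\sin\alpha_k}=(-1)^{\lfloor r/2\rfloor}2^{r-1},
\]
which is the first assertion.

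Finally, the eigenvalues $\lambda_k=2-2\cos\alpha_k$ are pairwise distinct, since the angles $\alpha_k$ are distinct and lie in $(0,\pi)$ where $\cos$ is injective. Hence the $r\times r$ matrix $B_1+\overline{D(A_n)}$ has $r$ distinct real eigenvalues, so it is diagonalizable over $\mathbb{R}$, and $v_1,\dots,v_r$ (each nonzero since its last coordinate is $1$, and each an eigenvector of $(B_1+\overline{D(A_n)})^{T}$ for a different $\lambda_k$) form a basis of eigenvectors of the transpose. Lemma~\ref{2.3} then gives $\rank W\!\left(B_1+\overline{D(A_n)}\right)=\bigl|\{\,k : e_r^{T}v_k\neq0\,\}\bigr|=r$. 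I expect the one genuinely nontrivial step to be the closed-form evaluation of the alternating sine sum together with keeping the signs straight throughout; everything else is assembling the two cited trigonometric identities and Lemma~\ref{2.3}.
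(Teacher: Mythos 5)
Your proof is correct, and its overall skeleton matches the paper's: find a closed form for each scalar $e_r^{T}v_k$, reduce the product to $\prod_{k=1}^{r}\sin\alpha_k=2^{-(r-1)}$ via the identity $\prod_{j=0}^{r-1}\sin\bigl(x+\tfrac{j\pi}{r}\bigr)=\sin(rx)/2^{r-1}$, and invoke Lemma \ref{2.3} for the rank. The one genuinely different step is the evaluation of $e_r^{T}v_k$. The paper splits into the cases $r$ odd and $r$ even, notes that the alternating signs make consecutive coordinates of $v_k$ cancel down to a sum of cosines of even (resp.\ odd) multiples of $\alpha_k$, and telescopes that sum against $\sin\alpha_k$ to obtain $\pm\sin r\alpha_k/\sin\alpha_k$; the sign of the final product is then handled separately through $\prod_{k=1}^{r}(-\cos k\pi)=(-1)^{\lfloor r/2\rfloor}$. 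You instead apply the Dirichlet kernel coordinatewise and sum the resulting alternating sine series by a complex geometric series, landing on the uniform formula $e_r^{T}v_k=(-1)^{r+k}/\sin\alpha_k$, which is consistent with the paper's expression since $\sin r\alpha_k=-\cos k\pi$. Your route avoids the parity case split and makes the sign of every factor explicit, so the sign of the product becomes a short exponent computation rather than a separate product evaluation; your choice $x=\pi/(2r)$ in the sine-product identity is also slightly cleaner than the paper's $x=-\pi/(2r)$, which forces an extra correction factor. Finally, you are more explicit than the paper about why Lemma \ref{2.3} applies, namely that the $\lambda_k$ are pairwise distinct, so $B_1+\overline{D(A_n)}$ is diagonalizable over $\mathbb{R}$ and the $v_k$ form an eigenbasis of its transpose.
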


\begin{proof}
Firstly, we show that
$$
\prod_{k=1}^r{e_{r}^{T}v_k}=\prod_{k=1}^r{\frac{\sin r\alpha _k}{\sin \alpha _k}}.
$$
If $r$ is odd, then
$$
e_{r}^{T}v_k=1+2\cos 2\alpha _k+2\cos 4\alpha _k+\cdots +2\cos \left( r-3 \right) \alpha _k+2\cos \left( r-1 \right) \alpha _k.
$$
Indeed, by $
2\cos \alpha \sin \beta =\sin \left( \alpha +\beta \right) -\sin \left( \alpha -\beta \right)
$, we obtain that
\begin{equation*}
	\begin{split}
&\sin \alpha _ke_{r}^{T}v_k=\sin \alpha _k\left( 1+2\cos 2\alpha _k+2\cos 4\alpha _k+\cdots +2\cos \left( r-3 \right) \alpha _k+2\cos \left( r-1 \right) \alpha _k \right) \\
&=\sin \alpha _k+\left( \sin 3\alpha _k-\sin \alpha _k \right) +\cdots +\left( \sin r\alpha _k-\sin \left( r-2 \right) \alpha _k \right) \\
&=\sin r\alpha _k.
    \end{split}
\end{equation*}
Note that $\alpha _k=\frac{2k-1}{2r}\pi \in \left( 0,\pi \right) $ for $k=1,...,r$, so $\sin \alpha _k\ne 0$. Hence
$$
\prod_{k=1}^r{e_{r}^{T}v_k}=\prod_{k=1}^r{\frac{\sin r\alpha _k}{\sin \alpha _k}}.
$$
If $r$ is even, then
$$
e_{r}^{T}v_k=-2\cos \alpha _k-2\cos 3\alpha _k-\cdots -2\cos \left( r-3 \right) \alpha _k-2\cos \left( r-1 \right) \alpha _k.
$$
Similarly,
$$\sin \alpha _ke_{r}^{T}v_k=\sin \alpha _k\left(-2\cos \alpha _k-2\cos 3\alpha _k-\cdots -2\cos \left( r-3 \right) \alpha _k-2\cos \left( r-1 \right) \alpha _k \right)=-\sin r\alpha _k.$$
In this case, we have
$$
\prod_{k=1}^r{e_{r}^{T}v_k}=\prod_{k=1}^r{\frac{\left( -1 \right) ^r\sin r\alpha _k}{\sin \alpha _k}=}\prod_{k=1}^r{\frac{\sin r\alpha _k}{\sin \alpha _k}}.
$$
So the above equation holds.

Since $\sin r\alpha _k=\sin \frac{2k-1}{2}\pi =-\cos k\pi $, we have
$$
\prod_{k=1}^r{e_{r}^{T}v_k}=\prod_{k=1}^r{\frac{\sin r\alpha _k}{\sin \alpha _k}}=\prod_{k=1}^r{\frac{\left( -\cos k\pi \right)}{\sin \alpha _k}}=\frac{\prod_{k=1}^r{\left( -\cos k\pi \right)}}{\prod_{k=1}^r{\sin \alpha _k}}.
$$
Note that
\begin{center}
	$
	\prod_{k=1}^r{\left( -\cos k\pi \right)}=\left( -1 \right) ^{\lfloor \frac{r}{2} \rfloor}
	$ and $
	\prod_{k=1}^r{\sin \alpha _k}=\left( \frac{1}{2} \right) ^{r-1}.
	$
\end{center}
The specific calculations are given below. We first calculate $\prod_{k=1}^r{\left( -\cos k\pi \right)}$.
Note that
$$\prod_{k=1}^r{\left( -\cos k\pi \right)}=\left( -1 \right) ^r\prod_{k=1}^r{\cos k\pi}.$$
If $r$ is even, then
 $$\prod_{k=1}^r{\left( -\cos k\pi \right)}=\left( -1 \right) ^r\cdot \left( -1 \right) ^{\frac{r}{2}}=\left( -1 \right) ^{\frac{r}{2}}=\left( -1 \right) ^{\lfloor \frac{r}{2} \rfloor}.$$
If $r$ is odd, then
$$\prod_{k=1}^r{\left( -\cos k\pi \right)}=\left( -1 \right) ^r\cdot \left( -1 \right) ^{\lfloor \frac{r}{2} \rfloor +1}=\left( -1 \right) \cdot \left( -1 \right) ^{\lfloor \frac{r}{2} \rfloor}\cdot \left( -1 \right) =\left( -1 \right) ^{\lfloor \frac{r}{2} \rfloor}.$$
So we have
$$
\prod_{k=1}^r{\left( -\cos k\pi \right)}=\left( -1 \right) ^{\lfloor \frac{r}{2} \rfloor}.
$$
Now we calculate  $\prod_{k=1}^r{\sin \alpha _k}$.
First, we show that
$$
\sin \left( n\theta \right) =2^{n-1}\prod_{k=0}^{n-1}{\sin \left( \theta +\frac{k\pi}{n} \right)}.
$$
Note that $$\sin \left( n\theta \right) =\frac{e^{in\theta}-e^{-in\theta}}{2i}
=\frac{e^{-in\theta}}{2i}\left( e^{2in\theta}-1 \right).$$
Let $\omega =e^{2i\theta}$, then $\omega =e^{\frac{-2ik\pi}{n}},k=0,1,...,n-1$ satisfy the equation $
\omega ^n-1=0$. So we have
$$
e^{2in\theta}-1=\prod_{k=0}^{n-1}{\left( e^{2i\theta}-e^{\frac{-2ik\pi}{n}}\right)}.
$$
Then we obtain that
\begin{align*}
		\sin \left( n\theta \right)
		&=\frac{e^{-in\theta}}{2i}\prod_{k=0}^{n-1}{\left( e^{2i\theta}-e^{\scriptstyle{\frac{-2ik\pi}{n}}} \right)}\\
		&=\frac{e^{-in\theta}}{2i}\prod_{k=0}^{n-1}{\left[ e^{i\theta}e^{\frac{-ik\pi}{n}}\left( e^{i\theta}e^{\frac{ik\pi}{n}}-e^{-i\theta}e^{\frac{-ik\pi}{n}} \right) \right]}\\
		&=\frac{e^{-in\theta}}{2i}\prod_{k=0}^{n-1}{\left[ \left( 2i \right) e^{i\theta}e^{\frac{-ik\pi}{n}}\left( \frac{e^{i\left( \theta +\frac{k\pi}{n} \right)}-e^{-i\left( \scriptstyle{\theta}+\frac{k\pi}{n} \right)}}{2i} \right) \right]}\\
		&=\frac{e^{-in\theta}}{2i}\left( 2i \right) ^ne^{in\theta}e^{\frac{-i\left( n-1 \right) \pi}{2}}\prod_{k=0}^{n-1}{\sin \left( \theta +\frac{k\pi}{n} \right)}\\
		&=2^{n-1}i^{^{n-1}}e^{\frac{-i\left( n-1 \right) \pi}{2}}\prod_{k=0}^{n-1}{\sin \left( \theta +\frac{k\pi}{n} \right)}\\
		&=2^{n-1}e^{\frac{i\left( n-1 \right) \pi}{2}}e^{\frac{-i\left( n-1 \right) \pi}{2}}\prod_{k=0}^{n-1}{\sin \left( \theta +\frac{k\pi}{n} \right)}\\
		&=2^{n-1}\prod_{k=0}^{n-1}{\sin \left( \theta +\frac{k\pi}{n} \right)}.
\end{align*}
That is
$$
\prod_{k=0}^{n-1}{\sin \left( \theta +\frac{k\pi}{n} \right) =\frac{\sin \left( n\theta \right)}{2^{n-1}}}.
$$
By applying the above formula, we have
\begin{align*}
		\prod_{k=1}^r{\sin \alpha _k}
		&=\prod_{k=1}^r{\sin \left( \frac{2k-1}{2r}\pi \right)}\\
		&=\prod_{k=1}^r{\sin \left( \frac{k\pi}{r}-\frac{\pi}{2r} \right)}\\
		&=\left[ \prod_{k=0}^{r-1}{\sin \left( -\frac{\pi}{2r}+\frac{k\pi}{r} \right)} \right] \times \sin \left( \frac{2r-1}{2r}\pi \right) \div \sin \left(-\frac{\pi}{2r} \right) \\
		&=\frac{\sin \left( r\cdot \frac{-\pi}{2r} \right)}{2^{r-1}}\times \sin \left( \pi -\frac{\pi}{2r} \right) \div \sin \left( -\frac{\pi}{2r} \right)\\
		&=\frac{-1}{2^{r-1}}\times \left[ -\sin \left( -\frac{\pi}{2r} \right) \right] \div \sin \left( -\frac{\pi}{2r} \right) \\
		&=\frac{1}{2^{r-1}}.
	\end{align*}
It follows that 
 $$
\prod_{k=1}^r{\sin \alpha _k}=\left( \frac{1}{2} \right) ^{r-1}.
$$
Therefore, we obtain $$
\prod_{k=1}^r{e_{r}^{T}v_k}=\left( -1 \right) ^{\lfloor \frac{r}{2} \rfloor}2^{r-1}.
$$
By Lemma \ref{2.3}, we have rank $W\left( B_1+\overline{D\left( A_n \right) } \right) =r .$
\end{proof}

\begin{Koro}
Suppose that $n$ is even. Then
\begin{center}
$\rank\; W_Q(A_n) =\lceil \frac{n}{2} \rceil .$
\end{center}
\end{Koro}
\begin{proof}
From Lemmas \ref{2.1} and \ref{2.2}, we deduce that rank $W_Q(A_n)$=rank $W( B_1+\overline{D(A_n) })$. Hence rank $W_Q(A_n) =\lceil \frac{n}{2} \rceil $ by Lemma \ref{3.2}. 
\end{proof}

The following lemma is needed for the determinant of  $W( B_1+\overline{D\left( A_n \right) }) $.

 \begin{Lem}\label{3.4}
 \cite{ref14}	It holds that
 $$
 \left| \begin{matrix}
 	1&		1&		\cdots&		1\\
 	2\cos \theta _1&		2\cos \theta _2&		\cdots&		2\cos \theta _q\\
 	2\cos 2\theta _1&       2\cos 2\theta _2&
 	\cdots&     2\cos 2\theta _q\\
 	\vdots&		\vdots&		&		\vdots\\
 	2\cos \left( q-1 \right) \theta _1&		2\cos \left( q-1 \right) \theta _2&		\cdots&		2\cos \left( q-1 \right) \theta _q\\
 \end{matrix} \right|=\prod_{1\leqslant j<i\leqslant q}{\left( 2\cos \theta _i-2\cos \theta _j \right)}.
 $$
 \end{Lem}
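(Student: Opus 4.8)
The plan is to recognise the rows of this matrix as values of monic Chebyshev-type polynomials and then reduce to an ordinary Vandermonde determinant. Write $x_j:=2\cos\theta_j$ for $j=1,\dots,q$, and let $M$ be the $q\times q$ matrix on the left-hand side, so that for $m=1,\dots,q-1$ its $(m+1)$-st row is $\bigl(2\cos(m\theta_1),\dots,2\cos(m\theta_q)\bigr)$, while its first row is the all-ones row.

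First I would establish that for every $m\ge 1$ there is a monic polynomial $p_m\in\mathbb{Z}[x]$ of degree $m$ with $2\cos(m\theta)=p_m(2\cos\theta)$ for all $\theta$. This is a routine induction: take $p_1(x)=x$ and $p_2(x)=x^2-2$ as base cases, and use the identity
$$
2\cos\bigl((m+1)\theta\bigr)=(2\cos\theta)\,\bigl(2\cos m\theta\bigr)-2\cos\bigl((m-1)\theta\bigr)
$$
to get the recurrence $p_{m+1}(x)=x\,p_m(x)-p_{m-1}(x)$ for $m\ge 2$; since $\deg(x\,p_m)=m+1>m-1=\deg p_{m-1}$ and $x\,p_m$ is monic, $p_{m+1}$ is again monic of degree $m+1$. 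The one point that needs a little care is the first row of $M$: it is the constant $1$, not the value $2\cos(0\cdot\theta)=2$ predicted by the Chebyshev pattern, so I treat it separately as the (monic, degree $0$) constant polynomial $p_0:=1$.

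With this in hand, the $(m+1)$-st row of $M$ equals $\bigl(p_m(x_1),\dots,p_m(x_q)\bigr)$ with $p_m$ monic of degree $m$, for every $m=0,1,\dots,q-1$. Expanding $p_m(x)=\sum_{l=0}^{m}b_{ml}x^l$ with $b_{mm}=1$ yields the factorisation $M=B\,V$, where $B=(b_{ml})_{0\le m,l\le q-1}$ is lower triangular with all diagonal entries equal to $1$, and $V=(x_j^{\,l})_{0\le l\le q-1,\ 1\le j\le q}$ is the Vandermonde matrix of $x_1,\dots,x_q$. Hence
$$
\det M=\det B\cdot\det V=1\cdot\prod_{1\le j<i\le q}(x_i-x_j)=\prod_{1\le j<i\le q}\bigl(2\cos\theta_i-2\cos\theta_j\bigr),
$$
which is the asserted formula.

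There is no genuine obstacle here; it is the standard ``polynomial rows $\Rightarrow$ Vandermonde'' computation. The only places to be attentive are the bookkeeping of leading coefficients — so that the triangular factor $B$ has determinant exactly $1$ and no spurious scalar appears in front of the Vandermonde product — and the slightly anomalous first row, which forces the use of $p_0=1$ in place of $2\cos 0=2$.
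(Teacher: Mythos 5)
Your argument is correct. Note that the paper itself does not prove this lemma at all: it is quoted from reference \cite{ref14} without proof, so there is no internal argument to compare against. Your proof is the standard and complete one: the polynomials $p_m$ you construct (satisfying $p_m(2\cos\theta)=2\cos(m\theta)$, i.e.\ $p_m(x)=2T_m(x/2)$ up to the $m=0$ adjustment) are monic of degree $m$, the resulting factorisation $M=BV$ with $B$ lower unitriangular and $V$ the Vandermonde matrix of the $x_j=2\cos\theta_j$ gives $\det M=\det V$, and the base cases, the three-term recurrence $p_{m+1}=xp_m-p_{m-1}$, and the replacement of $2\cos(0\cdot\theta)=2$ by the monic constant $p_0=1$ in the first row are all handled correctly. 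In effect you have supplied the missing proof rather than an alternative to one; the only thing you give up by not citing \cite{ref14} is brevity, and what you gain is that the paper would become self-contained at this point.
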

 We give a formula for the determinant of $W( B_1+\overline{D(A_n) })$.
 \begin{Prop}
 Suppose that $n$ is even. Then the determinant of $W( B_1+\overline{D(A_n)}) $ is
 $$
 \det  W(B_1+\overline{D(A_n)}) =2^{r-1}.
 $$
\end{Prop}

\begin{proof}

Let $ v_1,...,v_r$ be the eigenvectors of $( B_1+\overline{D(A_n) }) ^T
 $. First, let us calculate the determinant of the matrix
 $
 \left[ \begin{matrix}
 	v_1&		v_2&		\cdots&		v_r\\
 \end{matrix} \right] .
 $
 For each $i=1,...,r-1$, we add the $(i+1)$-th row to the $i$-th row. Then we obtain
 \begin{equation*}
 	\begin{split}
 		&\det \left[ \begin{matrix}
 			v_1&		v_2&		\cdots&		v_r\\
 		\end{matrix} \right]\\
 		&=\left| \begin{matrix}
 			\left( -1 \right) ^{r-1}\left( 1+\sum_{i=1}^{r-1}{2\cos i\alpha _1} \right)&		\left( -1 \right) ^{r-1}\left( 1+\sum_{i=1}^{r-1}{2\cos i\alpha _2} \right)&		\cdots&		\left( -1 \right) ^{r-1}\left( 1+\sum_{i=1}^{r-1}{2\cos i\alpha _r} \right)\\
 			\left( -1 \right) ^{r-2}\left( 1+\sum_{i=1}^{r-2}{2\cos i\alpha _1} \right)&		\left( -1 \right) ^{r-2}\left( 1+\sum_{i=1}^{r-2}{2\cos i\alpha _2} \right)&		\cdots&		\left( -1 \right) ^{r-2}\left( 1+\sum_{i=1}^{r-2}{2\cos i\alpha _r} \right)\\
 			\vdots&		\vdots&		&		\vdots\\
 			-1-2\cos \alpha _1&   -1-2\cos \alpha _2&  \cdots&      -1-2\cos \alpha _r\\
 			1&		1&		\cdots&		1\\
 		\end{matrix} \right|\\
 		&=\left| \begin{matrix}
 			\left( -1 \right) ^{r-1}2\cos \left( r-1 \right) \alpha _1&		\left( -1 \right) ^{r-1}2\cos \left( r-1 \right) \alpha _2&		\cdots&		\left( -1 \right) ^{r-1}2\cos \left( r-1 \right) \alpha _r\\
 			\left( -1 \right) ^{r-2}2\cos \left( r-2 \right) \alpha _1&		\left( -1 \right) ^{r-2}2\cos \left( r-2 \right) \alpha _2&		\cdots&		\left( -1 \right) ^{r-2}2\cos \left( r-2 \right) \alpha _r\\
 			\vdots&		\vdots&		&		\vdots\\
 			-2\cos \alpha _1&  -2\cos \alpha _2&
 			\cdots&      -2\cos \alpha _r\\
 			1&		1&		\cdots&		1\\
 		\end{matrix} \right|\\
 		&=\left( -1 \right) ^{\frac{\left( r-1 \right) r}{2}}\left| \begin{matrix}
 			2\cos \left( r-1 \right) \alpha _1&		2\cos \left( r-1 \right) \alpha _2&		\cdots&		2\cos \left( r-1 \right) \alpha _r\\
 			2\cos \left( r-2 \right) \alpha _1&		2\cos \left( r-2 \right) \alpha _2&		\cdots&		2\cos \left( r-2 \right) \alpha _r\\
 			\vdots&		\vdots&		&		\vdots\\
 			2\cos \alpha _1&  2\cos \alpha _2&
 			\cdots&      2\cos \alpha _r\\
 			1&		1&		\cdots&		1\\
 		\end{matrix} \right|\\
 		&=\left( -1 \right) ^{\frac{\left( r-1 \right) r}{2}+\lfloor \frac{r}{2} \rfloor}\left| \begin{matrix}
 				1&		1&		\cdots&		1\\
 				2\cos \alpha _1&  2\cos \alpha _2&
 				\cdots&      2\cos \alpha _r\\
 				\vdots&		\vdots&		&		\vdots\\
 			2\cos \left( r-2 \right) \alpha _1&		2\cos \left( r-2 \right) \alpha _2&		\cdots&		2\cos \left( r-2 \right) \alpha _r\\
 			2\cos \left( r-1 \right) \alpha _1&		2\cos \left( r-1 \right) \alpha _2&		\cdots&		2\cos \left( r-1 \right) \alpha _r\\
 		\end{matrix} \right|.
 	\end{split}
 \end{equation*}
By Lemma \ref{3.4}, we have
\begin{equation*}
\begin{split}
	&\det  \left[ \begin{matrix}
		v_1&		v_2&		\cdots&		v_r\\
	\end{matrix} \right] \\
	&=\left( -1 \right) ^{\frac{\left( r-1 \right) r}{2}+\lfloor \frac{r}{2} \rfloor}\prod_{1\leqslant j<i\leqslant r}{\left( 2\cos \alpha _i-2\cos \alpha _j \right)}\\
	&=\left( -1 \right) ^{\lfloor \frac{r}{2} \rfloor}\prod_{1\leqslant j<i\leqslant r}{\left( -2\cos \alpha _i+2\cos \alpha _j \right)}.
\end{split}
\end{equation*}
Note that $$
-2\cos \alpha _i+2\cos \alpha _j=\left( 2-2\cos \alpha _i \right) -\left( 2-2\cos \alpha _j \right) =\lambda _i-\lambda _j,
$$
where we recall that $\lambda _k=2-2\cos \alpha _k$.
So we have
$$\det  \left[ \begin{matrix}
	v_1&		v_2&		\cdots&		v_r\\
\end{matrix} \right] =\left( -1 \right) ^{\lfloor \frac{r}{2} \rfloor}\prod_{1\leqslant j<i\leqslant r}{\left(\lambda _i-\lambda _j \right)}.$$
By Lemma \ref{2.3} and Lemma \ref{3.2}, the determinant of $W\left( B_1+\overline{D\left( A_n \right) } \right) $ is $2^{r-1}$.
\end{proof}

Now, we can formulate the Smith normal form of $
W_Q\left( A_n \right) $ when $n$ is even.
\begin{Koro}\label{3.6}
Suppose that $n$ is even.Then the Smith normal form of $W_Q\left( A_n \right) $ is
$$
\mathrm{diag}\left( \underset{r=\lceil \frac{n}{2} \rceil}{\underbrace{1,2,2,...,2}},0,...,0 \right),
$$
where $r$ is the rank of $W_Q\left( A_n \right) $.
 \end{Koro}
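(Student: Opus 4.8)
The plan is to pin down the invariant factors $d_1\mid d_2\mid\cdots\mid d_r$ of $W_Q(A_n)$ from three pieces of data: how many there are, what their product is, and which of them are odd. By Lemma~\ref{2.1} the nonzero invariant factors of $W_Q(A_n)$ are exactly those of $\overline{W_Q(A_n)}$, and by the rank formula established just above there are precisely $r=\lceil n/2\rceil=n/2$ of them (the others are $0$). By Lemma~\ref{2.2}(a) we have $\overline{W_Q(A_n)}=W\!\left(B_1+\overline{D(A_n)}\right)$, and the Proposition preceding this corollary gives $\det W\!\left(B_1+\overline{D(A_n)}\right)=2^{r-1}$. Hence $D(r)=2^{r-1}$, so that $d_1d_2\cdots d_r=2^{r-1}$ and in particular every $d_i$ is a power of $2$.

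Next I would determine the parities of the $d_i$ by reducing $W_Q(A_n)$ modulo $2$. The key observation is the identity $Q(A_n)e_n=\bigl(A(A_n)+D(A_n)\bigr)e_n=2\,D(A_n)e_n$, which holds because both $A(A_n)e_n$ and $D(A_n)e_n$ equal the vector of vertex degrees of $A_n$. Consequently $Q(A_n)^{j}e_n=2\,Q(A_n)^{j-1}D(A_n)e_n$ is an even integer vector for every $j\ge 1$, while the first column of $W_Q(A_n)$ is the all-ones vector $e_n$. Thus over $\mathbb{F}_2$ every column of $W_Q(A_n)$ except the first vanishes, so the rank of $W_Q(A_n)$ modulo $2$ equals $1$. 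Since the mod-$2$ reduction of a Smith normal form has rank equal to the number of invariant factors of $W_Q(A_n)$ that are odd, and a power of $2$ is odd only when it equals $1$, exactly one of $d_1,\dots,d_r$ equals $1$. Because $d_1\mid d_2\mid\cdots\mid d_r$, this forces $d_1=1$ and $d_i\ge 2$ for $i=2,\dots,r$.

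Finally, combining $d_1=1$ with $d_1d_2\cdots d_r=2^{r-1}$ gives $d_2d_3\cdots d_r=2^{r-1}$, a product of $r-1$ integers each at least $2$; hence $d_2=\cdots=d_r=2$. Together with the fact that the rank of $W_Q(A_n)$ is $r$, this yields the Smith normal form $\mathrm{diag}\bigl(1,\underbrace{2,\dots,2}_{r-1},0,\dots,0\bigr)$, as claimed. The substantive work has already been done in the preceding results (the eigenvector computation and the determinant evaluation), so within the corollary itself the only point requiring care is the observation $Q(A_n)e_n=2D(A_n)e_n$ and its use to compute the $\mathbb{F}_2$-rank, since that is exactly what upgrades the weak conclusion ``the $d_i$ are powers of $2$ with product $2^{r-1}$'' to the precise answer $\mathrm{diag}(1,2,\dots,2,0,\dots,0)$.
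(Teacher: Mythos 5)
Your proposal is correct, and it uses the same core inputs as the paper (Lemma~\ref{2.1} to pass to $\overline{W_Q(A_n)}$, Lemma~\ref{2.2} and the preceding Proposition to get $\det\overline{W_Q(A_n)}=2^{r-1}$), but it adds a step the paper's own proof does not spell out. The paper simply asserts that ``by the definition of invariant factors'' the Smith normal form of $\overline{W_Q(A_n)}$ is $\mathrm{diag}(1,2,\dots,2)$; as you correctly sense, the determinant alone only gives $d_1\cdots d_r=2^{r-1}$ with each $d_i$ a power of $2$, which is consistent with, say, $\mathrm{diag}(1,1,\dots,4,\dots)$. Your mod-$2$ argument closes this gap: the identity $Q(A_n)e_n=2D(A_n)e_n$ (both $A(A_n)e_n$ and $D(A_n)e_n$ are the degree vector) makes every column of $W_Q(A_n)$ after the first even, so the $\mathbb{F}_2$-rank is $1$, hence exactly one invariant factor is odd, forcing $d_1=1$ and then $d_2=\cdots=d_r=2$ by the divisibility chain and the product constraint. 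This is a standard device in the walk-matrix literature (it is essentially how Wang--Wang--Guo handle $D_n$), and your write-up is more rigorous than the paper's one-line justification; the only cosmetic quibble is that $D(r)=2^{r-1}$ should be read as the $r$-th determinant divisor of $\overline{W_Q(A_n)}\oplus O_{n-r}$, transferred to $W_Q(A_n)$ via Lemma~\ref{2.1}, rather than directly as a gcd of all $r\times r$ minors of $W_Q(A_n)$ --- which is exactly how you set it up in your first sentence.
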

 \begin{proof}
When $n$ is even, by Lemma \ref{2.2}, we have
$$
\det \overline{W_Q\left( A_n \right) }=\det W\left( B_1+\overline{D\left( A_n \right) } \right)= 2^{r-1}.
$$
By the definition of invariant factors, the Smith normal form of $\overline{W_Q\left( A_n \right) }$is
$$
\mathrm{diag}\left( 1,2,2,...,2 \right).
$$
Thus it follows from Lemma \ref{2.1} that $W_Q\left( A_n \right)$ has the Smith normal form
	$$
	\mathrm{diag}\left( \underset{r=\lceil \frac{n}{2} \rceil}{\underbrace{1,2,2,...,2}},0,...,0 \right).
	$$
 \end{proof}

\subsection{Smith normal form of $W_Q\left( A_n \right) $ for $n$ is odd
}
\begin{Prop}\label{3.7}
	Let $\mu _k=2-2\cos \beta _k$ and let
	$$
	w_k=\left[ \begin{array}{c}
		\left( -1 \right) ^{r-1}2\cos \left( r-1 \right) \beta _k\\
		\left( -1 \right) ^{r-2}2\cos \left( r-2 \right) \beta _k\\
		\vdots\\
		-2\cos \beta _k\\
		1\\
	\end{array} \right] ,
	$$
	where $\beta _k=\frac{2k-2}{2r-1}\pi $  for $k=1,...,r$. Then $w_k$ is an eigenvector of $
	\left( B_2+\overline{D\left( A_n \right) } \right) ^T
	$ corresponding to the eigenvalue $\mu _k$ for $k=1,...,r$.
\end{Prop}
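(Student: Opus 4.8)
The plan is to verify directly that $\left(B_2+\overline{D(A_n)}\right)^Tw_k=\mu_kw_k$ componentwise, exactly as in the even case treated above. First I would record the shape of the matrix. Writing $N=B_2+\overline{D(A_n)}$ and using that for $n$ odd one has $r=\frac{n+1}{2}<n$, so $\overline{D(A_n)}=\mathrm{diag}(1,2,\dots,2)$, the transpose $N^T$ is the $r\times r$ almost-tridiagonal matrix with diagonal $(1,2,\dots,2)$, all subdiagonal entries equal to $1$, and all superdiagonal entries equal to $1$ except $(N^T)_{r-1,r}=2$. Hence the eigenvalue equation must be checked in four regimes: the first component $j=1$, the generic components $2\le j\le r-2$, the component $j=r-1$, and the last component $j=r$. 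Note that the last entry of $w_k$ equals $1$, which is the single deviation from the pattern $(w_k)_j=(-1)^{r-j}2\cos(r-j)\beta_k$; together with the modified corner of $N^T$, this is what turns the last two rows into boundary cases.

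Two trigonometric identities carry the argument. One is the product-to-sum formula $2\cos\beta_k\cdot2\cos m\beta_k=2\cos(m+1)\beta_k+2\cos(m-1)\beta_k$; the other, crucial one is the relation $(2r-1)\beta_k=2(k-1)\pi$, which gives $\cos r\beta_k=\cos(r-1)\beta_k$ (the odd-case analogue of $\cos r\alpha_k=0$ used in the even case). For the generic rows, expanding $(w_k)_{j-1}+2(w_k)_j+(w_k)_{j+1}$ and applying $\cos(r-j+1)\beta_k+\cos(r-j-1)\beta_k=2\cos(r-j)\beta_k\cos\beta_k$ immediately gives $(2-2\cos\beta_k)(w_k)_j=\mu_k(w_k)_j$, with no boundary input needed. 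For $j=r-1$ the equation $(w_k)_{r-2}+2(w_k)_{r-1}+2(w_k)_r=\mu_k(w_k)_{r-1}$ collapses to $(2\cos\beta_k)^2=2+2\cos2\beta_k$, a double-angle identity. For $j=r$ the equation is the trivial $-2\cos\beta_k+2=(2-2\cos\beta_k)\cdot1$. Finally, for $j=1$, after expanding $(w_k)_1+(w_k)_2$ and $\mu_k(w_k)_1$ and invoking the product-to-sum formula once, the identity reduces to precisely $\cos r\beta_k=\cos(r-1)\beta_k$, which holds by the relation above.

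The only non-mechanical point --- the \emph{hard part} --- is the first component: it is the sole place where the exact value $\beta_k=\frac{2k-2}{2r-1}\pi$ is used, and one must notice that $(2r-1)\beta_k=2(k-1)\pi$ is precisely the wrap-around condition that forces $\cos r\beta_k=\cos(r-1)\beta_k$ and closes the computation; everything else is routine trigonometry paralleling the even case. One should also keep careful track of the asymmetry of $B_2$ (the $2$ in its last subdiagonal slot, absent in $B_1$), since it is exactly what promotes both $j=r-1$ and $j=r$ to boundary rows, and the smallest cases such as $r=2$ (i.e.\ $n=3$), where the $j=1$ and $j=r-1$ rows coincide, can be dispatched by a direct hand check.
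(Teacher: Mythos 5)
Your proof is correct and follows essentially the same route as the paper: a direct componentwise verification of $\left(B_2+\overline{D(A_n)}\right)^Tw_k=\mu_kw_k$ using the product-to-sum formula, with the first entry hinging on $(2r-1)\beta_k=2(k-1)\pi$ (equivalently $\cos r\beta_k=\cos(r-1)\beta_k$), exactly as in the paper. The only cosmetic difference is that you split off $j=r-1$ as a boundary row, whereas the paper absorbs it into the generic range $2\le j\le r-1$ because the corner entry $2$ of $B_2$ against $(w_k)_r=1$ reproduces the generic term $2\cos 0\cdot\beta_k$; both treatments are valid.
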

 \begin{proof}
 It is enough to show that $(B_2+\overline{D(A_n) }) ^Tw_k=\mu _kw_k
$ for $k=1,...,r$. First, we show that the first entries of $( B_2+\overline{D(A_n) }) ^Tw_k$ and $\mu _kw_k$ are the same, that is,
$$
(-1) ^{r-1}2\cos(r-1) \beta _k+(-1) ^{r-2}2\cos(r-2) \beta _k=(-1) ^{r-1}(2-2\cos\beta _k) 2\cos(r-1) \beta _k.
$$
The first entry of $(B_2+\overline{D(A_n) }) ^Tw_k$ is
\begin{equation*}
	\begin{split}
		&(-1) ^{r-1}2\cos (r-1) \beta _k+(-1) ^{r-2}2\cos(r-2) \beta _k\\
		&=(-1) ^{r-1}2\cos(r-1) \beta _k-(-1) ^{r-1}2\cos(r-2) \beta _k\\
		&=2(-1) ^{r-1}[ \cos (r-1) \beta _k-\cos(r-2) \beta _k ].
	\end{split}
\end{equation*}
The first entry of $\mu _kw_k$ is
\begin{equation*}
	\begin{split}
	&(-1) ^{r-1}(2-2\cos \beta _k) 2\cos(r-1) \beta _k\\
	&=2(-1) ^{r-1}[ 2\cos(r-1) \beta _k-2\cos(r-1) \beta _k\cos \beta _k ].
	\end{split}
\end{equation*}
\\By applying the formula $2\cos \alpha \cos \beta =\cos ( \alpha +\beta) +\cos(\alpha -\beta)$ , we obtain
\begin{equation*}
	\begin{split}
	&(-1) ^{r-1}( 2-2\cos \beta _k) 2\cos(r-1) \beta _k\\
	&=2(-1) ^{r-1}[ 2\cos( r-1) \beta _k-(\cos r\beta _k+\cos(r-2) \beta _k) ].	
	\end{split}
\end{equation*}
Since $\cos(2r-1) \beta _k=\cos 2(k-1) \pi =1$ and $\sin(2r-1) \beta _k=\sin 2(k-1) \pi =0$, we have
 \begin{equation*}
 	\begin{split}
 		&2\cos(r-1) \beta _k\\	
 		&=2\cos (2r-1) \beta _k\cos( r-1) \beta _k\\
 		&=\cos[(2r-1) +( r-1)] \beta _k+\cos[(2r-1) -(r-1)] \beta _k\\
 		&=[\cos(r-1) \beta _k\cos(2r-1) \beta _k-\sin(r-1) \beta _k\sin(2r-1) \beta _k]+\cos r\beta _k\\
 		&=\cos(r-1) \beta _k+\cos r\beta _k.
 	\end{split}
 \end{equation*}
Therefore, we obtain that the first entry of $\mu _kw_k$ is
\begin{equation*}
	\begin{split}
		&(-1) ^{r-1}(2-2\cos \beta _k) 2\cos(r-1) \beta _k\\
	&=2(-1) ^{r-1}[\cos(r-1) \beta _k+\cos r\beta _k-( \cos r\beta _k+\cos(r-2) \beta _k)] \\
	&=2(-1) ^{r-1}[\cos(r-1) \beta _k-\cos(r-2) \beta _k] .
	\end{split}
\end{equation*}
Hence the first entries of $( B_2+\overline{D(A_n) }) ^Tw_k$ and $\mu _kw_k$ are the same.

Now, we prove that the $j$-th entries of $(B_2+\overline{D(A_n) }) ^Tw_k$ and $\mu _kw_k$ are equal for $j=2,...,r-1$. The $j$-th entry of $\mu _kw_k$ is
$$
(2-2\cos \beta _k) \cdot (-1) ^{r-j}2\cos( r-j) \beta _k.
$$
The $j$-th entry of $( B_2+\overline{D(A_n) }) ^Tw_k$  is
\begin{equation*}
	\begin{split}
		&(-1) ^{r+1-j}2\cos( r+1-j) \beta _k+2(-1) ^{r-j}\cdot 2\cos(r-j) \beta _k+(-1) ^{r-1-j}2\cos(r-1-j) \beta _k\\
		&=(-1) ^{r-j}[ -2\cos( r+1-j) \beta _k+4\cos(r-j) \beta _k-2\cos(r-1-j) \beta _k] \\
		&=2(-1) ^{r-j}[ 2\cos(r-j) \beta _k-( \cos( r+1-j) \beta _k+\cos(r-1-j) \beta _k)] \\
		&=2(-1) ^{r-j}[ 2\cos( r-j) \beta _k-2\cos(r-j) \beta _k\cos \beta _k] \\
		&=2(-1) ^{r-j}(1-\cos \beta _k) \cdot 2\cos( r-j) \beta _k.
	\end{split}
\end{equation*}
Thus the $j$-th entries of $\left( B_2+\overline{D\left( A_n \right) } \right) ^Tw_k$ and $\mu _kw_k$ are the same for $j=2,...,r-1$.
Since
$$
1\times \left( -2\cos \beta _k \right) +2\times 1=2-2\cos \beta _k=\left( 2-2\cos \beta _k \right) \times 1,
$$
the last entries of $\left( B_2+\overline{D\left( A_n \right) } \right) ^Tw_k$ and $\mu _kw_k$ are also equal. Therefore, $( B_2+\overline{D(A_n) }) ^Tw_k=\mu _kw_k$ for $k=1,...,r$.
\end{proof}

In order to find the determinant of $W( B_2+\overline{D(A_n) }) $, we first compute the product $\prod_{k=1}^r{e_{r}^{T}w_k}$.

\begin{Lem}\label{3.8}
Let $e_r$ be the all-ones vector of dimension $r$, and let $w_k$ be as above. Then
	$$
	\prod_{k=1}^r{e_{r}^{T}w_k}=\left( -1 \right) ^{\lfloor \frac{r}{2} \rfloor}2^{r-1}.
	$$
Furthermore, $\rank\;W( B_2+\overline{D(A_n) }) =r$.
\end{Lem}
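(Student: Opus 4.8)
The plan is to follow the proof of Lemma \ref{3.2} almost verbatim, the one new ingredient being a telescoping identity tailored to the simpler shape of the eigenvectors $w_k$ from Proposition \ref{3.7}.

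First I would reduce $e_r^T w_k$ to a closed trigonometric form. Reading off the entries of $w_k$ gives $e_r^T w_k = 1 + 2\sum_{m=1}^{r-1}(-1)^m\cos m\beta_k$. Multiplying by $\cos(\beta_k/2)$ and expanding each product via $2\cos(\beta_k/2)\cos m\beta_k = \cos\bigl((m+\tfrac12)\beta_k\bigr)+\cos\bigl((m-\tfrac12)\beta_k\bigr)$, the alternating signs make consecutive terms cancel, and the leftover boundary term produced by the constant $1$ cancels as well; what survives is $\cos(\beta_k/2)\,e_r^T w_k = (-1)^{r+1}\cos\bigl(\tfrac{2r-1}{2}\beta_k\bigr)$. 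Since $\tfrac{2r-1}{2}\beta_k = (k-1)\pi$, the right-hand side is $(-1)^{r+1}(-1)^{k-1}=(-1)^{r+k}$, and since $\beta_k/2 = \tfrac{(k-1)\pi}{2r-1}\in[0,\tfrac{\pi}{2})$ we have $\cos(\beta_k/2)>0$; hence $e_r^T w_k = (-1)^{r+k}/\cos(\beta_k/2)$, in particular nonzero for every $k$.

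Next I would take the product over $k=1,\dots,r$, obtaining $\prod_{k=1}^r e_r^T w_k = (-1)^{\,r^2+r(r+1)/2}\big/\prod_{j=1}^{r-1}\cos\tfrac{j\pi}{2r-1}$, where the $k=1$ factor has dropped out because $\cos 0=1$. The cosine product equals $2^{-(r-1)}$; I would get this either from the classical identity $\prod_{j=1}^{m}\cos\tfrac{j\pi}{2m+1}=2^{-m}$, or, staying within the paper's toolkit, by writing each $\cos\tfrac{j\pi}{2r-1}$ as the sine of the complementary angle and invoking the identity $\prod_{k=0}^{n-1}\sin\bigl(\theta+\tfrac{k\pi}{n}\bigr)=\sin(n\theta)/2^{n-1}$ proved in Lemma \ref{3.2}. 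It then remains to settle the sign: $r^2+\tfrac{r(r+1)}{2}$ differs from $\binom r2=\tfrac{r(r-1)}{2}$ by the even number $r(r+1)$, and $\binom r2\equiv\lfloor r/2\rfloor\pmod 2$, so the exponent is $\equiv\lfloor r/2\rfloor$, which yields $\prod_{k=1}^r e_r^T w_k = (-1)^{\lfloor r/2\rfloor}2^{r-1}$ as claimed.

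Finally, for the rank statement, the eigenvalues $\mu_k=2-2\cos\beta_k$ are pairwise distinct because the angles $\beta_k=\tfrac{2(k-1)}{2r-1}\pi$ all lie in $[0,\pi)$, on which cosine is injective; hence $B_2+\overline{D(A_n)}$ is diagonalizable and, by Proposition \ref{3.7}, $w_1,\dots,w_r$ is a complete system of eigenvectors of its transpose. Lemma \ref{2.3} then gives $\rank W\bigl(B_2+\overline{D(A_n)}\bigr)=\#\{k:e_r^T w_k\neq 0\}=r$, since every $e_r^T w_k$ was shown nonzero. The only genuinely delicate points are the bookkeeping in the telescoping step (tracking the two boundary terms together with the stray constant $1$) and the sign reduction $r^2+\tfrac{r(r+1)}{2}\equiv\lfloor r/2\rfloor\pmod 2$; both are routine, and — unlike in Lemma \ref{3.2} — no split into the cases $r$ even and $r$ odd seems to be required here.
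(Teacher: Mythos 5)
Your proof is correct, and while it follows the same overall skeleton as the paper's argument (telescope $e_r^Tw_k$ into a closed trigonometric form, evaluate the resulting cosine product via $\prod_{j=1}^{m}\cos\frac{j\pi}{2m+1}=2^{-m}$, sort out the sign, and finish with Lemma \ref{2.3}), your choice of multiplier is genuinely different and pays off. The paper multiplies $e_r^Tw_k$ by $\sin\beta_k$, which forces a case split on the parity of $r$ and, more seriously, passes through the expression $\frac{\sin r\beta_k-\sin(r-1)\beta_k}{\sin\beta_k}$ after asserting that $\beta_k\in(0,\pi)$ for all $k$ --- but $\beta_1=0$, so $\sin\beta_1=0$ and that intermediate quotient is $0/0$ at $k=1$; the paper only recovers a well-defined expression after a second sum-to-product step cancels the offending factor. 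Your multiplier $\cos(\beta_k/2)$ is nonzero for every $k$ (since $\beta_k/2\in[0,\pi/2)$), reaches the final form $e_r^Tw_k=(-1)^{r+k}/\cos(\beta_k/2)$ in a single telescoping pass with no parity split, and makes the nonvanishing of each $e_r^Tw_k$ (needed for the rank claim) immediate. The sign bookkeeping $r^2+\tfrac{r(r+1)}{2}\equiv\binom{r}{2}\equiv\lfloor r/2\rfloor\pmod 2$ checks out, and your explicit remark that the $\mu_k$ are pairwise distinct (so that Lemma \ref{2.3} applies) is a hypothesis the paper uses silently. In short: same strategy, cleaner execution, and your version repairs a small but real defect in the published argument.
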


\begin{proof} We have the following 
$$
e_{r}^{T}w_k=\left( -1 \right) ^{r-1}2\cos \left( r-1 \right) \beta _k+\left( -1 \right) ^{r-2}2\cos \left( r-2 \right) \beta _k+\cdots +2\cos 2\beta _k-2\cos \beta _k+1.
$$
If $r$ is odd, by $
2\cos \alpha \sin \beta =\sin( \alpha +\beta) -\sin(\alpha -\beta)
$, it follows that
	\begin{equation*}
		\begin{split}
			&\sin \beta _ke_{r}^{T}w_k\\
			&=\sin \beta _k\left[ 2\cos \left( r-1 \right) \beta _k-2\cos \left( r-2 \right) \beta _k+\cdots +2\cos 2\beta _k-2\cos \beta _k+1 \right] \\
			&=\left( \sin r\beta _k-\sin \left( r-2 \right) \beta _k \right) -\left( \sin \left( r-1 \right) \beta _k-\sin \left( r-3 \right) \beta _k \right) +\cdots +\left( \sin 3\beta _k-\sin \beta _k \right) -\sin 2\beta _k+\sin \beta _k\\
		&=\sin r\beta _k-\sin \left( r-1 \right) \beta _k.
\end{split}
\end{equation*}
Note that $\beta _k=\frac{2k-2}{2r-1}\pi \in \left( 0,\pi \right) $ for $k=1,...,r$, so $\sin \beta _k\ne 0$. Hence
$$
\prod_{k=1}^r{e_{r}^{T}w_k=\prod_{k=1}^r{\frac{\sin r\beta _k-\sin \left( r-1 \right) \beta _k}{\sin \beta _k}}}.
$$
Similarly, if $r$ is even, then
\begin{equation*}
\begin{split}
&\sin \beta _ke_{r}^{T}w_k\\
&=\sin \beta _k\left[ -2\cos \left( r-1 \right) \beta _k+2\cos \left( r-2 \right) \beta _k-\cdots+2\cos 2\beta _k-2\cos \beta _k+1 \right] \\
&=-\left( \sin r\beta _k-\sin \left( r-2 \right) \beta _k \right) +\left( \sin \left( r-1 \right) \beta _k-\sin \left( r-3 \right) \beta _k \right) -\cdots +\left( \sin 3\beta _k-\sin \beta _k \right) -\sin 2\beta _k+\sin \beta _k\\
&=-\sin r\beta _k+\sin \left( r-1 \right) \beta _k.
\end{split}
\end{equation*}
In this case, we have
$$
\prod_{k=1}^r{e_{r}^{T}w_k=\prod_{k=1}^r{\frac{-\sin r\beta _k+\sin \left( r-1 \right) \beta _k}{\sin \beta _k}}}=\left( -1 \right) ^r\prod_{k=1}^r{\frac{\sin r\beta _k-\sin \left( r-1 \right) \beta _k}{\sin \beta _k}=\prod_{k=1}^r{\frac{\sin r\beta _k-\sin \left( r-1 \right) \beta _k}{\sin \beta _k}}}.
$$
Thus the the following equation holds $$\prod_{k=1}^r{e_{r}^{T}w_k=\prod_{k=1}^r{\frac{\sin r\beta _k-\sin \left( r-1 \right) \beta _k}{\sin \beta _k}}}.$$
By using $\sin \alpha -\sin \beta =2\cos \left( \frac{\alpha +\beta}{2} \right) \sin \left( \frac{\alpha -\beta}{2} \right)$ and $
\sin \alpha =2\sin \frac{\alpha}{2}\cos \frac{\alpha}{2}
$, we have
$$
\prod_{k=1}^r{e_{r}^{T}w_k=\prod_{k=1}^r{\frac{2\cos \left( \frac{2r-1}{2} \right) \beta _k\sin \frac{1}{2}\beta _k}{2\sin \frac{1}{2}\beta _k\cos \frac{1}{2}\beta _k}=\prod_{k=1}^r{\textstyle{\frac{\cos \left( \frac{2r-1}{2} \right) \beta _k}{\cos \frac{1}{2}\beta _k}}}}}.
$$
Since $\cos \left( \frac{2r-1}{2} \right) \beta _k=\cos \left( k-1 \right) \pi $, we obtain
$$
\prod_{k=1}^r{e_{r}^{T}w_k=\textstyle{\frac{\prod_{k=1}^r{\cos \left( k-1 \right) \pi}}{\prod_{k=1}^r{\cos \frac{1}{2}\beta _k}}}}.
$$
Consequently, we see that
\begin{center}
$
\prod_{k=1}^r{\cos \left( k-1 \right) \pi}=\left( -1 \right) ^{\lfloor \frac{r}{2} \rfloor}
$ and $\prod_{k=1}^r{\cos \frac{1}{2}\beta _k}=\left( \frac{1}{2} \right) ^{r-1}.
$
\end{center}
The specific calculations are given below. We first calculate $\prod_{k=1}^r{\cos \left( k-1 \right) \pi}$. Since 
$$
\cos k\pi=\begin{cases}
1,&\text{if $k$ is even}\\
-1,&\text{if $k$ is odd}\\
\end{cases},
$$
we have
$$\prod_{k=1}^r{\cos \left( k-1 \right) \pi}=\prod_{k=0}^{r-1}{\cos k\pi}=\left( -1 \right) ^{\lfloor \frac{r}{2} \rfloor}.
$$
Then we calculate $\prod_{k=1}^r{\cos \frac{1}{2}\beta _k}.$ First, we show that
$$\prod_{k=1}^m{\cos \left( \frac{k\pi}{2m+1} \right)}=\frac{1}{2^m}.$$
Note that $$
x^{2m+1}-1=\left( x-1 \right) \left( x-\xi \right) \left( x-\xi ^2 \right) \cdots \left( x-\xi ^{2m} \right) ,\xi =e^{i\frac{2\pi}{2m+1}}.
$$
Considering that the root $\xi ^k$ and its conjugate root $\xi ^{2m+1-k}$ ($k=1,..,m$) satisfy
$$
\left( x-\xi ^k \right) \left( x-\xi ^{2m+1-k} \right) =x^2-2x\cos \frac{2k\pi}{2m+1}+1,
$$
we have
\begin{align*}
	x^{2m+1}-1
	&=\left( x-1 \right) \left[ \left( x-\xi \right) \left( x-\xi ^{2m} \right) \right] \cdots \left[ \left( x-\xi ^m \right) \left( x-\xi ^{m+1} \right) \right] \\
	&=\left( x-1 \right) \left( x^2-2x\cos \frac{2\pi}{2m+1}+1 \right) \cdots \left( x^2-2x\cos \frac{2m\pi}{2m+1}+1 \right) \\
	&=\left( x-1 \right) \prod_{k=1}^m{\left( x^2-2x\cos \frac{2k\pi}{2m+1}+1 \right)}.
\end{align*}
Let $x=-1$. Then we have
$$
-2=-2\prod_{k=1}^m{\left( 2+2\cos \frac{2k\pi}{2m+1} \right)}=-2\cdot 2^m\prod_{k=1}^m{\left( 1+\cos \frac{2k\pi}{2m+1} \right)}.
$$
By applying the formula $\cos 2\alpha =2\cos \alpha ^2-1$,
we have
$$
\frac{1}{2^m}=\prod_{k=1}^m{2\cos ^2\frac{k\pi}{2m+1}}=2^m\prod_{k=1}^m{\cos ^2\frac{k\pi}{2m+1}}.
$$
That is
$$
\frac{1}{2^{2m}}=\prod_{k=1}^m{\cos ^2\frac{k\pi}{2m+1}}.
$$
So we have
$$\prod_{k=1}^m{\cos \left( \frac{k\pi}{2m+1} \right)}=\frac{1}{2^m}.$$
By applying the above formula, we obtain
$$
\prod_{k=1}^r{\cos \frac{1}{2}\beta _k}
=\prod_{k=1}^r{\cos \left( \frac{1}{2}\cdot\frac{2k-2}{2r-1}\pi \right)}
=\prod_{k=1}^r{\cos \left( \frac{k-1}{2r-1}\pi \right)}
=\prod_{k=0}^{r-1}{\cos \frac{k\pi}{2r-1}}
=\prod_{k=1}^{r-1}{\cos \frac{k\pi}{2\left( r-1 \right) +1}}
=\frac{1}{2^{r-1}}.
$$
Hence we have
$$
\prod_{k=1}^r{\cos \frac{1}{2}\beta _k}=\left( \frac{1}{2} \right) ^{r-1}.
$$
Therefore, we obtain 
$$
\prod_{k=1}^r{e_{r}^{T}w_k}=\left( -1 \right) ^{\lfloor \frac{r}{2} \rfloor}2^{r-1}.
$$
By Lemma \ref{2.3}, we have $\rank\; W\left( B_2+\overline{D\left( A_n \right) } \right) =r$.
\end{proof}

\begin{Koro}
Suppose that $n$ is odd. Then
\begin{center}
$\rank\; W_Q\left( A_n \right) =\lceil \frac{n}{2} \rceil.$
\end{center}
\end{Koro}

\begin{proof}
From Lemmas \ref{2.1} and \ref{2.2}, we deduce that rank $W_Q( A_n)=\rank W( B_2+\overline{D\left( A_n \right) })$. Hence
	$\rank W_Q(A_n) =\lceil \frac{n}{2} \rceil$ by Lemma \ref{3.8}.   
\end{proof}

We give a formula for the determinant of $W( B_2+\overline{D( A_n)})$.

\begin{Prop}
Suppose that $n$ is odd. Then the determinant of $W( B_2+\overline{D( A_n) }) $ is
	$$
	\det  W( B_2+\overline{D( A_n)}) =2^{r-1}.
	$$
 \end{Prop}
 
 \begin{proof}
 Let $ w_1,...,w_r$ be the eigenvectors of $
	\left( B_2+\overline{D\left( A_n \right) } \right) ^T
	$. First, let us calculate the determinant of the matrix
	$\left[ \begin{matrix}
		w_1&		w_2&		\cdots&		w_r\\
        \end{matrix} \right] .$
	\begin{equation*}
		\begin{split}
			&\det \left[ \begin{matrix}
				w_1&		w_2&		\cdots&		w_r\\
			\end{matrix} \right]\\
			&=\left| \begin{matrix}
				\left( -1 \right) ^{r-1}2\cos \left( r-1 \right) \beta _1&		\left( -1 \right) ^{r-1}2\cos \left( r-1 \right) \beta _2&		\cdots&		\left( -1 \right) ^{r-1}2\cos \left( r-1 \right) \beta _r\\
				\left( -1 \right) ^{r-2}2\cos \left( r-2 \right) \beta _1&		\left( -1 \right) ^{r-2}2\cos \left( r-2 \right) \beta _2&		\cdots&		\left( -1 \right) ^{r-2}2\cos \left( r-2 \right) \beta _r\\
				\vdots&		\vdots&		&		\vdots\\
				-2\cos \beta _1&  -2\cos \beta _2&
				\cdots&      -2\cos \beta _r\\
				1&		1&		\cdots&		1\\
			\end{matrix} \right|\\
			&=\left( -1 \right) ^{\frac{\left( r-1 \right) r}{2}}\left| \begin{matrix}
				2\cos \left( r-1 \right) \beta _1&		2\cos \left( r-1 \right) \beta _2&		\cdots&		2\cos \left( r-1 \right) \beta _r\\
				2\cos \left( r-2 \right) \beta _1&		2\cos \left( r-2 \right) \beta _2&		\cdots&		2\cos \left( r-2 \right) \beta _r\\
				\vdots&		\vdots&		&		\vdots\\
				2\cos \beta _1&  2\cos \beta _2&
				\cdots&      2\cos \beta _r\\
				1&		1&		\cdots&		1\\
			\end{matrix} \right|\\
			&=\left( -1 \right) ^{\frac{\left( r-1 \right) r}{2}+\lfloor \frac{r}{2} \rfloor}\left| \begin{matrix}
				1&		1&		\cdots&		1\\
				2\cos \beta _1&  2\cos \beta _2&
				\cdots&      2\cos \beta _r\\
				\vdots&		\vdots&		&		\vdots\\
				2\cos \left( r-2 \right) \beta _1&		2\cos \left( r-2 \right) \beta _2&		\cdots&		2\cos \left( r-2 \right) \beta _r\\
				2\cos \left( r-1 \right) \beta _1&		2\cos \left( r-1 \right) \beta _2&		\cdots&		2\cos \left( r-1 \right) \beta _r\\
			\end{matrix} \right|.
		\end{split}
	\end{equation*}
By Lemma \ref{3.4}, we have
\begin{equation*}
\begin{split}
&\det  \left[ \begin{matrix}
w_1&		w_2&		\cdots&		w_r\\
\end{matrix} \right] \\
&=\left( -1 \right) ^{\frac{\left( r-1 \right) r}{2}+\lfloor \frac{r}{2} \rfloor}\prod_{1\leqslant j<i\leqslant r}{\left( 2\cos \beta _i-2\cos \beta _j \right)}\\
&=\left( -1 \right) ^{\lfloor \frac{r}{2} \rfloor}\prod_{1\leqslant j<i\leqslant r}{\left( -2\cos \beta _i+2\cos \beta _j \right)}.
\end{split}
\end{equation*}
Note that 
$$
-2\cos \beta _i+2\cos \beta _j=\left( 2-2\cos \beta _i \right) -\left( 2-2\cos \beta _j \right) =\mu _i-\mu _j,
$$
where we recall that $\mu _k=2-2\cos \beta _k$.
So we have
$$
\det  \left[ \begin{matrix}
		w_1&		w_2&		\cdots&		w_r\\
	\end{matrix} \right] =\left( -1 \right) ^{\lfloor \frac{r}{2} \rfloor}\prod_{1\leqslant j<i\leqslant r}{\left(\mu _i-\mu _j \right)}.
$$
By Lemmas \ref{2.3} and \ref{3.8}, the determinant of $W\left( B_2+\overline{D\left( A_n \right) } \right) $ is $2^{r-1}$.
 \end{proof}

Now, we find the Smith normal form of $W_Q\left( A_n \right) $ when $n$ is odd.
\begin{Koro}
Suppose that $n$ is odd. Then the Smith normal form of $W_Q\left( A_n \right) $ is
$$
\mathrm{diag}\left( \underset{r=\lceil \frac{n}{2} \rceil}{\underbrace{1,2,2,...,2}},0,...,0 \right),
$$
where $r$ is the rank of $W_Q\left( A_n \right) $.
 \end{Koro}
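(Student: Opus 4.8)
The plan is to follow the same route as in the proof of Corollary~\ref{3.6}: first pin down the Smith normal form of the $r\times r$ integer matrix $\overline{W_Q(A_n)}$, and then transfer it to $W_Q(A_n)$ through Lemma~\ref{2.1}. For the first step, Lemma~\ref{2.2}$(b)$ identifies $\overline{W_Q(A_n)}$ with $W(B_2+\overline{D(A_n)})$, so the preceding Proposition gives $\det\overline{W_Q(A_n)}=2^{r-1}$; moreover $\rank W_Q(A_n)=r=\lceil n/2\rceil$ by Lemma~\ref{3.8} together with Lemmas~\ref{2.1} and~\ref{2.2}. It then remains only to show that the invariant factors of $\overline{W_Q(A_n)}$ are $1,2,\dots,2$.

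The observation that makes this immediate is that, reduced modulo $2$, every column of $\overline{W_Q(A_n)}$ except the first is zero. Indeed, each vertex of $A_n$ has degree $1$ or $2$, so for every vertex $v$ the $v$-th row sum of $Q(A_n)=A(A_n)+D(A_n)$ equals $2\deg(v)$ and hence is even; therefore $Q(A_n)e_n\equiv 0\pmod 2$, and by induction $Q(A_n)^{\,j}e_n\equiv 0\pmod 2$ for all $j\geq 1$. Since the $j$-th column of $\overline{W_Q(A_n)}$ is the vector of the first $r$ coordinates of $Q(A_n)^{\,j-1}e_n$, its first column equals $e_r$ while columns $2,\dots,r$ have all entries even. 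Pulling a factor $2$ out of each of those $r-1$ columns, one can write $\overline{W_Q(A_n)}=P\cdot\mathrm{diag}(1,2,\dots,2)$ with $P$ an integer matrix. Comparing determinants forces $\det P=1$, so $P\in GL_r(\mathbb Z)$; hence $\overline{W_Q(A_n)}$ has the same Smith normal form as $\mathrm{diag}(1,2,\dots,2)$, which is already in Smith form because $1\mid 2\mid\cdots\mid 2$.

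It then only remains to apply Lemma~\ref{2.1}: since $W_Q(A_n)$ and $\overline{W_Q(A_n)}\oplus O_{n-r}$ share a Smith normal form, the Smith normal form of $W_Q(A_n)$ is $\mathrm{diag}(1,2,\dots,2,0,\dots,0)$ with a leading block of size $r=\lceil n/2\rceil$, as claimed.

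There is no real obstacle here once the determinant $2^{r-1}$ from the preceding Proposition is available: the entire content is the elementary mod-$2$ remark above, and the same argument applies verbatim in the even case (Corollary~\ref{3.6}), with parity entering only through which Proposition supplies the determinant. The single point that needs care is that one genuinely needs $\det P=\pm1$, not merely $\det P\neq 0$; this is exactly why the precise value of the determinant, and not just the rank, is used, and it is also what lets us bypass a direct computation of the intermediate determinant divisors $D(2),\dots,D(r-1)$.
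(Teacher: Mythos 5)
Your argument is correct and follows the same skeleton as the paper's proof: reduce to the $r\times r$ matrix $\overline{W_Q(A_n)}$ via Lemma~\ref{2.1}, identify it with $W\bigl(B_2+\overline{D(A_n)}\bigr)$ via Lemma~\ref{2.2}$(b)$, and feed in the determinant $2^{r-1}$ and the rank from Lemma~\ref{3.8}. Where you go beyond the paper is in the last step: the paper disposes of it with the phrase ``by the definition of invariant factors,'' which by itself is not enough, since an $r\times r$ integer matrix with first determinant divisor $1$ and determinant $2^{r-1}$ could a priori have invariant factors such as $1,1,2,4$ rather than $1,2,\dots,2$. Your mod-$2$ observation --- that every row sum of $Q(A_n)$ is $2\deg(v)$, hence $Q(A_n)^{j}e_n\equiv 0\pmod 2$ for $j\geq 1$, so that $\overline{W_Q(A_n)}=P\cdot\mathrm{diag}(1,2,\dots,2)$ with $\det P=1$ and $P\in GL_r(\mathbb Z)$ --- is exactly the missing justification, and it is the cleanest way to pin down all the determinant divisors at once. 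So your proof is not merely a restatement: it makes explicit (and rigorous) the one step the paper leaves to the reader, while everything else coincides with the paper's route.
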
	
 
\begin{proof}
The proof is similar to that of Corollary \ref{3.6}. When $n$ is odd, by Lemma \ref{2.2}, we have
$$
\det \overline{W_Q\left( A_n \right) }=\det W\left( B_2+\overline{D\left( A_n \right) } \right)= 2^{r-1}.$$
Thus, by the definition of invariant factors and Lemma \ref{2.1}, the Smith normal form of $W_Q\left( A_n \right)$ is
$$
\mathrm{diag}\left( \underset{r=\lceil \frac{n}{2} \rceil}{\underbrace{1,2,2,...,2}},0,...,0 \right).
$$
\end{proof}

\section{Concluding remarks\label{con}}

For the Dynkin graph $A_n$ with $n$ vertices, the ranks of the $Q$-walk matrix $W_Q\left( A_n \right)$ are the same for both even and odd values of $n$. Thus, we may write the rank of the $Q$-walk matrix $W_Q\left( A_n \right)$ as follows:
\begin{center}
rank $W_Q\left( A_n \right) =\lceil \frac{n}{2} \rceil $.
\end{center}
The Smith normal form of $W_Q\left( A_n \right)$ are the same for both even and odd values of $n$. Thus, we may write the Smith normal form of $W_Q\left( A_n \right)$ as follows:
$$
\mathrm{diag}\left( \underset{\lceil \frac{n}{2} \rceil}{\underbrace{1,2,2,...,2}},0,...,0 \right).
$$

\bigskip
{\bf Acknowledgements.} The authors are grateful to the referees for their careful reading, valuable comments and suggestions. Shengyong Pan is supported by Beijing Natural Science Foundation (1262017,1252011) and the Fundamental Research Funds for the Central Universities of Beijing Jiaotong University (2024JBMC001)

{\bf Conflicts of interest}\\
The authors declared that they have no conflicts of interest to this work.

\end{document}